\newlength{\defbaselineskip} \setlength{\defbaselineskip}{\baselineskip}
\theoremstyle{theorem}
\newtheorem{thm}{Theorem}[section]
\newtheorem{cor}[thm]{Corollary}
\newtheorem{lema}[thm]{Lemma}
\newtheorem{obs}[thm]{Proposition}
\newtheorem{question}[thm]{Question}
\newtheorem{pr}{Algorithm}
\newtheorem{fact}[thm]{Fact}
\theoremstyle{definition} 
\theoremstyle{definition}  %
 \numberwithin{equation}{section}
 \theoremstyle{definition}
\newtheorem{con}[thm]{Conjecture}
\newtheorem{df}[thm]{Definition}
\newtheorem{exm}[thm]{Example}
\newtheorem{rem}[thm]{Remark}
\def\P{\mathbb{P}}
\def\r{\mathbb{R}}
\def\Z{\mathbb{Z}}
\def\N{\mathbb{N}}
\def\C{\mathbb{C}}
\def\R{\mathbb{R}}
\def\Q{\mathbb{Q}}
\def\o{\mathcal{O}}
\DeclareMathOperator{\Sec}{Sec}
\DeclareMathOperator{\im}{Im}
\DeclareMathOperator{\conv}{conv}
\def\CC{\mathbb{C}}
\def\ob{\begin{obs}}
\def\kob{\end{obs}}
\def\dow{\begin{proof}}
\def\kdow{\end{proof}}
\def\tw{\begin{thm}}
\def\ktw{\end{thm}}
\def\hip{\begin{con}}
\def\khip{\end{con}}
\def\lem{\begin{lema}}
\def\klem{\end{lema}}
\def\ex{\begin{exm}}
\def\prog{\begin{pr}}
\def\kprog{\end{pr}}
\def\wn{\begin{cor}}
\def\kwn{\end{cor}}
\def\uwa{\begin{rem}}
\def\kuwa{\end{rem}}
\def\kex{\end{exm}}
\def\dfi{\begin{df}}
\def\kdfi{\end{df}}
\def\fa{\begin{fact}}
\def\kfa{\end{fact}}
\title{Secant varieties of toric varieties arising from simplicial complexes}
\author{M. Azeem Khadam}
\address{Abdus Salam School of Mathematical Sciences, GC University Lahore, Pakistan}
\email{azeem.khadam@sms.edu.pk}
\author{Mateusz Micha\l ek}
\address{Max Planck Institute for Mathematics in the Sciences, 04103 Leipzig, Germany and \
Aalto University, Espoo, Finland and
Polish Academy of Sciences, Warsaw, Poland}
\email{wajcha2@poczta.onet.pl}
\author{Piotr Zwiernik}
\address{Department of Economics and Business, Universitat Pompeu Fabra, Barcelona, Spain}
\email{piotr.zwiernik@upf.edu}
\thanks{AK is supported by ASSMS, GC University Lahore under a postdoctoral fellowship. PZ was supported from the Spanish Government grants (RYC-2017-22544,PGC2018-101643-B-I00,SEV-2015-0563), and Ayudas Fundaci\'on BBVA a Equipos de Investigaci\'on Cientifica 2017.}
\begin{document}
\maketitle
\begin{abstract}
Motivated by the study of the secant variety of the Segre-Veronese variety we propose a general framework to analyze properties of the secant varieties of toric embeddings of affine spaces  defined by simplicial complexes. We prove that every such secant is toric, which gives a way to use combinatorial tools to study singularities. We focus on the Segre-Veronese variety for which we completely classify their secants that give Gorenstein or $\Q$-Gorenstein varieties. We conclude providing the explicit description of the singular locus.  
\end{abstract}

\section{Introduction}
The study of low rank tensors is a fundamental topic linking algebra and geometry with an increasing number of applications, for example, in quantum physics and machine learning. Two of the most basic projective algebraic varieties: \emph{Segre product} and \emph{Veronese embedding} are exactly the geometric loci of tensors, or symmetric tensors, of rank one. These varieties play a fundamental role in algebraic geometry and are used as a starting point for other interesting constructions. 

For rank greater than one, the best understood is the case of two dimensional tensors, that is, matrices. One of the first theorems we learn in linear algebra is that the set $S_{a,b,r}$ of $a\times b$ matrices of rank at most $r$ is defined by the vanishing of minors of size $r+1$. This realizes $S_{a,b,r}$ as an algebraic variety \cite[Lecture 9]{harris2013algebraic}. Even this simple case has interesting geometry: $S_{a,b,r}$ is \emph{singular} along $S_{a,b,r-1}$. The nature of the singularities has been a topic of intensive studies \cite[Chapter 6]{bruns2006determinantal}. Classical results tell us that $S_{a,b,r}$ is always normal, Cohen-Macaulay, and it is Gorenstein if and only if $a=b$  \cite[Theorem 6.3, Corollary 8.9]{bruns2006determinantal}, \cite[Theorem 5.5.6]{svanes1974coherent}. Similar results hold for symmetric matrices.

For higher dimensional tensors, in spite of a large amount of work \cite{landsberg2012tensors, landsberg2017geometry}, \cite[Chapter 9]{ourbook} the situation is much less clear. First, tensors of rank at most $r$ do not have to form a closed set. Hence, the natural geometric object to consider is the closure of that set, known as the $r$-th \emph{secant variety}. Some secant varieties of Segre products were proved to be arithmetically Cohen-Macaulay \cite{landsberg2007ideals,oeding2016equations} which, in general, is only a conjecture \cite{oeding2016all}. One of the most basic questions is about the dimension; see, for example, \cite{abo2012new,catalisano2005higher,laface2013secant}.

Our main focus in this article is on the 2nd secant varieties, classically referred to as \emph{the secant varieties}, of the Segre-Veronese varieties, which have been intensively studied over the last two decades. The secant varieties of Segre-Veronese varieties are known to be normal \cite[Theorem 2.2]{vermeire2009singularities}. The tangential variety of the Segre product is also always normal \cite[Proposition 8.5]{MOZ}, however this is no longer the case for the Segre-Veronese \cite[Example 2.21]{michalek2018flexible}.
 More refined questions about the geometry of secant varieties were possible to address only in special cases. One of the breakthroughs was made by Raicu who proved, after partial attempts \cite{allman2008phylogenetic,landsberg2009secant,landsberg2004ideals,landsberg2007ideals}, a conjecture by Garcia, Stillman, and Sturmfels \cite{garcia2005algebraic} providing a complete list of generators of the ideal of the secant variety \cite{raicu2012secant}. This description is given in terms of minors of flattening matrices and the methods rely on representation theory. Analogous result for the tangential variety was obtained in \cite{oeding2014tangential}. 

A large part of the research on the geometry of secant varieties has been motivated by applications in an increasing number of fields such as geometric complexity theory \cite{landsberg2017geometry}, algebraic statistics \cite{AStensors,pwz}, and machine learning \cite{cichocki2014era} where understanding geometry of low rank tensors gives insight into designing and analysis of numerical algorithms used to find low rank approximations \cite{hackbusch2012tensor, qi2019complex}. Segre-Veronese varieties also appear in quantum statistics \cite{cinardi2019quantum} and complex network geometry \cite{bianconi2015complex,zuev2015exponential}. 

Despite intense studies, many questions regarding the secant varieties of the Segre-Veronese varieties remained open. So far, even for border rank two, the complete description of the singular locus of Segre-Veronese varieties was not known. This challenge was one of the main motivations of our article. In Corollary \ref{cSLSV} we provide a complete description of the singular locus for the secant variety of any Segre-Veronese variety. This extends the results for the Segre \cite{MOZ} and the Veronese \cite{kanev1999chordal} cases. (For secant varieties of the Veronese the singular locus was described in small cases in \cite{han2018singularities}.) We also provide, in Theorem \ref{GSV} and Theorem \ref{QGSV}, a complete classification of the secant varieties of Segre-Veronese varieties that are ($\Q-$) Gorenstein, extending previous results for matrices and Segre products.

To study secants of rank one tensor varieties it is customary to use techniques from representation theory. We further develop methods initiated in \cite{MOZ}, where a completely different set of techniques inspired by probability was proposed. The main idea is to treat points of the variety as (formal) probability distributions and apply methods from algebraic statistics. A change of coordinates, inspired by cumulants, leads to new structures on secant varieties. Although application of `cumulant methods' in algebraic geometry is very nonstandard, it has been previously applied with success: The singular locus of the secant and the tangential varieties of the Segre product was studied in \cite{MOZ,sturmfels2013binary}. It was also used to prove flexibility of secant varieties of Segre-Veronese \cite{michalek2018flexible} and to study secant varieties of Grassmannians \cite{manivel2015secants}. Some generalizations of these techniques have also been explored in the context of other classical varieties \cite{ciliberto2016cremona}.

Although studying low rank tensors was our main motivation, the setting we propose is much more general. To any (labelled) simplicial complex we associate an embedding of an affine space. Only special simplicial complexes correspond to Segre-Veronese varieties. It turns out that secant and tangential varieties of such embeddings are always \emph{toric varieties}; see Theorem \ref{main} and Theorem \ref{main-tangent}. This allows us to apply the powerful machinery of toric geometry to study secant varieties of Segre-Veronese varieties --- which are not toric.

An interesting special case of the simplicial toric embedding, that is when the simplicial complex is a graph, has already appeared in the literature. Here it turns out that the secant and tangential varieties coincide and are isomorphic to a product of an affine space and a toric variety associated to a graph in the sense of Hibi and Ohsugi \cite{Hibi}. These toric varieties have also been intensively studied \cite{ohsugi2006special,villarreal1990cohen}. Moreover, toric models correspond in algebraic statistics to discrete exponential families \cite[Section 6.2]{Seth}. Their secants correspond to mixture models \cite[Chapters 14 and 16]{Seth}. In this context, the singular locus also plays an important role \cite[Example 14.1.9]{Seth}. Further, secant and tangential varieties of toric varieties are an object of study in pure mathematics, see for example~\cite{cox2007secant, raicu2012secant}.

%More generally, simplicial toric embeddings are one of the standard constructions in toric geometry; see, for example, \cite{}. Their secants were studies in \cite{?} \textcolor{blue}{I COULD NOT FIND ANY REFERENCE}. 

This article is organized as follows. Section~\ref{sec:simplicial} deals with toric varieties and simplicial embeddings as well as simplicial cumulants. We prove in this section that the secant variety of the simplicial toric embedding is the product of an affine space and a toric variety; see Theorem \ref{main}. In Section~\ref{sec:Segre-Veronese}, we discuss the polytope of the above mentioned toric variety and prove that it is normal; see Lemma \ref{lem:Pnormal}. Moreover, we also present the complete facet description of the polytope; see Proposition \ref{pSVF}. In Section~\ref{sec:singularity}, by using the polytope description from previous section, we present the complete classification of ($\Q-)$Gorenstein property of the secant varieties of the Segre-Veronese variety; see Theorem \ref{GSV} and Theorem \ref{QGSV}. In Section~\ref{sec:locus} we conclude this article by providing a complete description of the singular locus of the secant of the Segre-Veronese variety; see Corollary \ref{cSLSV}.
%-rank: algebra and geometry
%-main results
%-methods: toric geometry and cumulants

%Who discussed simplicial toric embeddings, who considered their secants. Argue why secants of Segre-Veronese are of general interest, give some history. Show which previous results we generalize. 

\section{Secant varieties, simplicial complexes and toric geometry}\label{sec:simplicial}

\subsection{Toric varieties and simplicial embeddings}

Let $\mathbf x=(x_1,\ldots,x_N)$ and $\mathbf t=(t_1,\ldots,t_n)$, and fix a subset $\mathcal C=\{\mathbf c_1,\ldots,\mathbf c_N\}$ of $\mathbb N^n$, where $\N$ denotes the set of nonnegative integers. Each vector $\mathbf c_i$ is identified with a monomial $\mathbf t^{\mathbf c_i}:=t_1^{c_{i1}}\cdots t_n^{c_{in}}$. The set $\mathcal C$ defines a map $e_{\mathcal C}$ from $\C^n$ to $\C^N$ where $x_{i}=\mathbf t^{\mathbf c_i}$ for $1\leq i\leq N$. The closure of the image of this map $V_{\mathcal C}:=\overline{e_{\mathcal C}(\C^n)}$ is called an \emph{affine toric variety}. We note that this differs from the classical definition of a toric variety, where, in addition, the variety needs to be normal; see \cite[Chapter 13]{sturmfelsks} for discussion.

A \emph{simplicial complex} $\Delta$ on the vertex set $\{1,\ldots,n\}$ is a collection of subsets, called \emph{simplices}, closed under taking subsets, that is, if $\sigma\in \Delta$ and $\tau\subset \sigma$ then $\tau\in \Delta$. A simplex $\sigma\in \Delta$ of cardinality $|\sigma|=i+1$ has \emph{dimension} $\dim(\sigma)=i$. In this paper we allow vertices to have repeated labels. In this case $\{1,\ldots,n\}$ always refers to the labelling set of $\Delta$ rather than its vertex set; see Figure~\ref{fig:simplicial1} for an example.

By the standard construction a simplicial complex defines an affine toric variety.
%\begin{df}\label{df:embS}
Let $\Delta$ be a simplicial complex with vertices labelled by variables $\mathbf t=(t_1,\dots,t_n)$ (with possible repetitions). Suppose $\Delta$ contains $N$ distinct simplices. Then $\Delta$ induces an embedding $e_\Delta:\CC^n\rightarrow\CC^N$, where the coordinates of the codomain are indexed by $\sigma\in \Delta$, by 
%\begin{equation}\label{eq:simplemb}
$$	\mathbf t\mapsto \mathbf x=(x_\sigma)_{\sigma\in \Delta},\qquad x_\sigma=\prod_{i\in \sigma} t_i.$$
%\end{equation}
 By convention, the monomial corresponding to the empty set is $x_\emptyset=1$. If two simplices have exactly the same labels, as multisets, we may identify them.
 We define the variety $V_\Delta\;:=\;\overline{e_\Delta(\C^n)}$.  Denote by $\Delta_{\geq 2}$ the set of simplices in $\Delta$ of dimension at least one. The toric variety $T_\Delta$ associated with the embedding $e_\Delta$ is the affine toric variety in $\C^{N-n-1}$ obtained as the closure of the projection of $e_\Delta$ to the coordinates $x_\sigma$ with $\sigma\in \Delta_{\geq 2}$. In this article both $V_\Delta$ and $T_\Delta$ have an important role to play.
%\end{df}

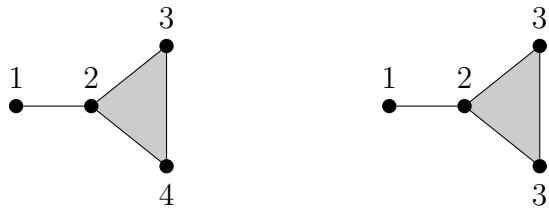
\begin{figure}[htp!]
\begin{tikzpicture}[scale=2, vertices/.style={draw, fill=black, circle, inner sep=0pt,minimum size=5pt}]
\filldraw[fill=black!20, draw=black] (-0.5,0.5)--(0,.1)--(0,.9)--cycle;
\node[vertices] (3) at (-1,0.5) [label=above:$1$]{};
\node[vertices] (5) at (-0.5,0.5) [label=above:$2$]{};
\node[vertices] (6) at (0,0.1) [label=below:$4$]{};
\node[vertices] (7) at (0,.9) [label=above:$3$]{};
\draw (3)--(5);
\end{tikzpicture}\qquad\qquad\qquad	\begin{tikzpicture}[scale=2, vertices/.style={draw, fill=black, circle, inner sep=0pt,minimum size=5pt}]
\filldraw[fill=black!20, draw=black] (-0.5,0.5)--(0,.1)--(0,.9)--cycle;
\node[vertices] (3) at (-1,0.5) [label=above:$1$]{};
\node[vertices] (5) at (-0.5,0.5) [label=above:$2$]{};
\node[vertices] (6) at (0,0.1) [label=below:$3$]{};
\node[vertices] (7) at (0,.9) [label=above:$3$]{};
\draw (3)--(5);
\end{tikzpicture}
\caption{Two simplicial complexes with four vertices over $\{1,2,3,4\}$ and over $\{1,2,3\}$ respectively.}	\label{fig:simplicial1}
\end{figure}

\begin{exm}\label{ex:basicex1}Let $\Delta$ be a simplicial complex with generators $\{1,2\}$, $\{2,3,4\}$; c.f. Figure~\ref{fig:simplicial1} on the left. Then $e_\Delta$ is given by 
$$
(t_1,t_2,t_3,t_4)\;\;\mapsto \;\;(1,t_1,t_2,t_3,t_4,\underline{t_1t_2,t_2t_3,t_2t_4,t_3t_4,t_2t_3t_4}).
$$
The underlined part corresponds to $x_\sigma$ for $\sigma\in \Delta_{\geq 2}$. The associated toric variety $T_\Delta$ is given by a single equation $x^{2}_{234}-x_{23}x_{24}x_{34}$. Specializing now to $t_3=t_4$ gives an example of a simplicial complex with vertices labelled with repetitions; see Figure~\ref{fig:simplicial1} on the right. Here $e_\Delta$ is given by 
$$
(t_1,t_2,t_3)\;\;\mapsto \;\;(1,t_1,t_2,t_3,\underline{t_1t_2,t_2t_3,t_3^2,t_2t_3^2}).
$$
The equation of $T_\Delta$ is $x_{233}^2-x_{23}^2x_{33}$. Note that in this example, the number $N$ of distinct simplices dropped by two. 
%In both cases the defining equations are straight-forward to write down.
\end{exm}

\begin{rem}
	Simplicial toric embeddings have appeared in quantum statistics. For example, consider equations (6) and (8) in \cite{cinardi2019quantum}. Here $x_\sigma$ is the associated fitness of a particular simplex in $\Delta$ with $t_i=e^{-\beta \epsilon_i}$, where $\beta$ is the inverse temperature and $\epsilon_i$ is the energy of a particular node. 
%	Versions of the setting above are used to model large complex networks; see, for example, \cite{bianconi2015complex} and \cite[Section 7]{zuev2015exponential}. 
\end{rem}

A simplicial complex is a \emph{graph} if all of its simplices have dimension at most one. When $\Delta$ is a graph, then $e_\Delta(\CC^n)$ is the graph of the map parameterizing the toric varieties defined in \cite{Hibi,villarreal1990cohen}. In this case, as we will see in Theorem \ref{main}, the secant variety of $V_\Delta$ is isomorphic to the product of an affine space and the toric variety $T_\Delta$ associated to the graph.

\begin{exm} 
Consider the following graph $G$.
\begin{figure}[htp!]
\begin{tikzpicture}[scale=2, vertices/.style={draw, fill=black, circle, inner sep=0pt,minimum size=5pt}]
%\filldraw[fill=black!20, draw=black] (-0.5,0.5)--(0,.1)--(0,.9)--cycle;
\node[vertices] (3) at (-1,0.5) [label=above:$1$]{};
\node[vertices] (5) at (-0.5,0.5) [label=above:$2$]{};
\node[vertices] (6) at (0,0.1) [label=below:$4$]{};
\node[vertices] (7) at (0,.9) [label=above:$3$]{};
\draw (3)--(5);
\draw (5)--(6);
\draw (5)--(7);
\draw (6)--(7);
\end{tikzpicture}
\end{figure}

\noindent We have the associated map:
$$e_G:\CC^4\ni(t_1,t_2,t_3,t_4)\mapsto(1,t_1,t_2,t_3,t_4,t_1t_2,t_2t_3,
t_2t_4,t_3t_4).$$
The image $e_G(\CC^4)$ is the graph of the function $(t_1,t_2,t_3,t_4)\mapsto (t_1t_2,t_2t_3,
t_2t_4,t_3t_4)$
parameterizing the toric variety associated to $G$; see  \cite{Hibi}.
\end{exm}

%Let $\mathbf c_\sigma\in \N^n$ be such that $\prod_{i\in \sigma}t_i=\mathbf t^{\mathbf c_\sigma}$. Then the toric embedding $e_\Delta$ is an affine toric variety defined by $\mathcal C=\{\mathbf c_\sigma:\;\sigma\in \Delta\}$. Note that $\mathbf c_\emptyset =\mathbf 0$. Define $\mathcal C_{\rm pr}=\{\hat{\mathbf c}_\sigma:\;\sigma\in \Delta\}$, where $\hat{\mathbf c}_\sigma$ differs from ${\mathbf c}_\sigma$ only in the first entry, which is now replaced with $1$ for all $\sigma$. The polytope in $\R^n$ 

\subsection{Simplicial cumulants}

%Fix a simplicial complex $\Delta$ and let $X=e_\Delta(\CC^n)$. 
In what follows, for simplicity, we assume that $\Delta$ is connected. The coordinates of the ambient space $\CC^N$ will be denoted by $x_\sigma$ for $\sigma\in \Delta$. In the special case, when $\sigma$ is of dimension zero and is given by a vertex $i$, we will often write $\sigma=i$ instead of $\sigma=\{i\}$. 
We define an automorphism of $\CC^N$ given by
$$y_\sigma\;=\;x_\sigma,\qquad \mbox{if }\dim (\sigma)\leq  0,$$
and otherwise, for all $\sigma\in \Delta_{\geq 2}$
$$y_\sigma\;=\;\sum_{\sigma'\subseteq \sigma} (-1)^{\dim \sigma+\dim \sigma'}x_{\sigma'}\prod_{i\in \sigma\setminus \sigma'}x_{i},$$
where the sum is taken over all subsimplices $\sigma'$ of $\sigma$ including the empty simplex, and the product is taken over vertices $i\in \sigma\setminus \sigma'$.
\begin{exm}
 For the first simplicial complex in Example~\ref{ex:basicex1}, we have $y_{ij}=x_{ij}-x_ix_j$ for all edges $\{i,j\}\in \Delta$ and 
 $$
 y_{234}\;=\;x_{234}-x_{2}x_{34}-x_{3}x_{24}-x_{4}x_{23}+2x_2x_3x_4.
 $$
 For the second simplicial complex in this example, the formulas for $y_\sigma$ are obtained by replacing $4$ with $3$. So for example, the simplex $\sigma=\{2,3,3\}$ gives $$y_{233}=x_{233}-x_2x_{33}-2x_3x_{23}+2x_2x_3^2.$$\end{exm}

Order the vertices of the simplicial complex $\Delta$ according to their labelling with the variables $x_1,\dots,x_n$. If a few vertices have the same label, then we order them arbitrarily. For example, the simplicial complex on the right in Figure~\ref{fig:simplicial1} has a natural numbering of its four vertices with $\{1,2,3,4\}$. In general, this induces a linear order on the vertices of all simplices in $\Delta$.
\begin{df}[Thick interval partition]
Fix a linearly ordered set $B$. We say that $B=\bigcup_{i=1}^k B_i$ is a \emph{thick interval partition} if and only if the following conditions are satisfied:
\begin{enumerate}
\item $B_i$ is disjoint from $B_j$ for $i\neq j$,
\item if $i<j$ then for any $x\in B_i$, $y\in B_j$ we have $x<y$, and
\item the cardinality of any $B_i$ is at least $2$.
\end{enumerate}
The set of thick interval partitions of $B$ is denoted by $IP(B)$. For an interval partition $g\in IP(B)$ we denote by $|g|$ the number $k$ of sets $B_i$.
\end{df}
We define the following automorphism of $\CC^N$:
$$z_\sigma\;=\;y_\sigma,\qquad \mbox{if }\dim (\sigma)\leq  0,$$
and otherwise, for all $\sigma\in \Delta_{\geq 2}$
$$z_\sigma\;=\;\sum_{g\in IP(\sigma)}(-1)^{|g|+1}\prod_{\sigma\in g}y_{\sigma}.$$
The change of coordinates from $(x_\sigma)$ to $(z_\sigma)$ generalises secant cumulants defined in \cite{MOZ}; see \cite{zwiernik2012cumulants} for a more detailed analysis of related constructions. For a fixed simplicial complex $\Delta$ we call $(z_\sigma)$ \emph{simplicial cumulants}. 

\begin{rem}
	One of the important properties of simplicial cumulants is that the image of $V_\Delta$ in this coordinate system is contained in the affine space given by $z_\sigma=0$ for all $\sigma\in \Delta_{\geq 2}$. We refer to \cite{ciliberto2016cremona} for a more general discussion.
\end{rem}

%Let $U_0\subseteq \P^S$ be given by $x_\emptyset \neq 0$.
Define an affine variety $\sigma_2(V_\Delta)$, that is, the secant variety of the embedding of the affine space $\CC^n$ given by $\Delta$. The variety $\sigma_2(V_\Delta)$ is parameterized by $2n+1$ parameters: $\pi$ and $u_i,t_i$ for $i=1,\ldots,n$, explicitly:
 $$x_\sigma\;=\;\pi\prod_{i\in\sigma}t_i+(1-\pi)\prod_{i\in\sigma}u_i\qquad\mbox{for all }{\sigma\in \Delta}.$$

All defined changes of coordinates are hierarchical in a sense that the formulas for $y_\sigma$ and $z_\sigma$ depend only on $x_{\sigma'}$ and $y_{\sigma'}$ for $\sigma'\subseteq \sigma$ respectively. In consequence the following result generalizes \cite[Lemma 3.1]{MOZ}.
\begin{lema}\label{lem:sec}The variety $\sigma_2(V_\Delta)$ in the coordinate system given by the simplicial cumulants is the Zariski closure of the image of the parameterization given by:
$$
\begin{array}{ll}
z_v\;=\;\pi t_v+(1-\pi) u_v & \mbox{if }\dim(v)=0,\\
z_\sigma\;=\;\pi(1-\pi)(1-2\pi)^{\dim{\sigma}-1}\prod_{i\in \sigma}(t_i-u_i)& \mbox{if }\dim(\sigma)\geq 1.
\end{array}
$$
%Here, each variable $x_i$ corresponds to two variables $u_i, t_i$, according to the parametrization of the secant variety:
%
% $$x_\sigma:=(\pi\prod_{i\in\sigma}t_i+(1-\pi)\prod_{i\in\sigma}u_i)_{\sigma\in \Delta}.$$
\end{lema}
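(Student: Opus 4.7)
The plan is to push the parameterization $x_\sigma=\pi\prod_{i\in\sigma}t_i+(1-\pi)\prod_{i\in\sigma}u_i$ through the two changes of coordinates $(x_\sigma)\rightsquigarrow(y_\sigma)\rightsquigarrow(z_\sigma)$. A preliminary observation is that both transformations are hierarchical: $y_\sigma$ and $z_\sigma$ depend only on $x_{\sigma'}$ with $\sigma'\subseteq\sigma$. Hence the whole computation may be carried out one simplex at a time, which is exactly the situation treated in the Segre case of \cite[Lemma 3.1]{MOZ}; this lemma is its natural extension. The case $\dim v=0$ is immediate from $z_v=y_v=x_v=\pi t_v+(1-\pi)u_v$, so the substance lies in $\sigma\in\Delta_{\geq2}$.

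For such a $\sigma$, my first step is to prove
$$y_\sigma=\pi(1-\pi)\,a_{|\sigma|}\prod_{i\in\sigma}(t_i-u_i),\qquad a_k:=(1-\pi)^{k-1}-(-\pi)^{k-1}.$$
Substituting the parameterizing formulas for $x_{\sigma'}$ into the defining formula for $y_\sigma$ splits it into two alternating sums. Each is collapsed by the elementary identity $\sum_{\sigma'\subseteq\sigma}(-1)^{|\sigma\setminus\sigma'|}\prod_{i\in\sigma'}a_i\prod_{j\in\sigma\setminus\sigma'}b_j=\prod_{i\in\sigma}(a_i-b_i)$ applied with $b_i=x_i$ and $a_i=t_i$ (respectively $a_i=u_i$), giving $y_\sigma=\pi\prod_i(t_i-x_i)+(1-\pi)\prod_i(u_i-x_i)$. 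The identities $t_i-x_i=(1-\pi)(t_i-u_i)$ and $u_i-x_i=-\pi(t_i-u_i)$ then produce the claimed factorization.

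The second step is to apply $z_\sigma=\sum_{g\in IP(\sigma)}(-1)^{|g|+1}\prod_{\tau\in g}y_\tau$. Since $g$ partitions $\sigma$, the factors $\prod_{i\in\tau}(t_i-u_i)$ multiply across the blocks to $\prod_{i\in\sigma}(t_i-u_i)$, so with $p=\pi(1-\pi)$ and $A(z)=\sum_{k\geq2}a_kz^k$ one obtains
$$\frac{z_\sigma}{\prod_{i\in\sigma}(t_i-u_i)}=[z^{|\sigma|}]\sum_{r\geq1}(-1)^{r+1}p^rA(z)^r=[z^{|\sigma|}]\,\frac{pA(z)}{1+pA(z)},$$
using that thick interval partitions of an ordered $k$-set correspond to compositions of $k$ with all parts of size at least~$2$. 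The closed form of $A(z)$ follows from the factorization $1-(1-2\pi)z-\pi(1-\pi)z^2=\bigl(1-(1-\pi)z\bigr)\bigl(1+\pi z\bigr)$, whose partial-fraction expansion yields exactly the coefficients $a_k$, so $A(z)=z^2/\bigl(1-(1-2\pi)z-pz^2\bigr)$. Substituting into $pA(z)/(1+pA(z))$, the $pz^2$ terms cancel in the denominator and the expression reduces to $pz^2/\bigl(1-(1-2\pi)z\bigr)=\sum_{k\geq2}p(1-2\pi)^{k-2}z^k$. Reading off the coefficient of $z^{|\sigma|}$ and recalling $|\sigma|-2=\dim\sigma-1$ completes the proof.

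The main obstacle is the translation of the sum over thick interval partitions into the geometric-series identity $pA(z)/(1+pA(z))$: once the combinatorial bijection with compositions into parts of size $\geq 2$ is in hand, the remainder is a one-line partial-fraction simplification. Keeping track of the fact that interval partitions are ordered (so the generating function is $A(z)^r$ rather than $A(z)^r/r!$) is the one place where care is needed.
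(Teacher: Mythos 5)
Your proof is correct, and it differs from the paper's in both of its main steps, in each case to good effect. For the intermediate formula for $y_\sigma$, the paper argues indirectly: it first shows that $u_v-t_v$ divides $y_\sigma$ for every $v\in\sigma$ (by exhibiting a cancellation between the contributions of $\sigma'$ and $\sigma'\setminus\{v\}$), then uses homogeneity to write $y_\sigma=P(\pi)\prod_{i\in\sigma}(u_i-t_i)$, and finally pins down $P(\pi)$ by evaluating at $u_i=0$, $t_i=1$. You instead collapse the defining sum directly via the identity $\sum_{\sigma'\subseteq\sigma}(-1)^{|\sigma\setminus\sigma'|}\prod_{i\in\sigma'}a_i\prod_{j\in\sigma\setminus\sigma'}b_j=\prod_{i\in\sigma}(a_i-b_i)$, obtaining $y_\sigma=\pi\prod_i(t_i-x_i)+(1-\pi)\prod_i(u_i-x_i)$ and then factoring out $t_i-x_i=(1-\pi)(t_i-u_i)$ and $u_i-x_i=-\pi(t_i-u_i)$; this is shorter and reaches the same closed form (your $\pi(1-\pi)a_{|\sigma|}$ equals the paper's $\pi(1-\pi)^{d}+(-\pi)^{d}(1-\pi)$ with $d=|\sigma|$). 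For the passage from $y_\sigma$ to $z_\sigma$, the paper simply asserts that it is ``a computation on polynomials in one variable'' and leaves it to the reader, whereas you supply the full argument: the bijection between thick interval partitions and compositions into parts of size at least two, the generating-function identity $\sum_{r\ge1}(-1)^{r+1}(pA(z))^r=pA(z)/(1+pA(z))$ (legitimate as a formal power series since $A$ has no terms below degree two), and the cancellation down to $pz^2/(1-(1-2\pi)z)$, which I checked against the cases $|\sigma|=2,3,4$. The one point you leave implicit is the reduction, made explicit in the paper, to the case where all $t_v,u_v$ are distinct variables when $\Delta$ has repeated labels; since your entire computation is a polynomial identity in vertex-indexed variables over subsets of the vertex set of $\sigma$, it survives specialization and no harm is done, but a sentence acknowledging this would make the write-up airtight.
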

\begin{proof}
We regard $x_\sigma$, $y_\sigma$ and $z_\sigma$ as polynomials in $\pi,t_i,u_i$. We have to prove that $z_\sigma$ has the form given in the lemma. For $z_v$ this is obvious, as $z_v=y_v=x_v$. In full generality, if $\Delta$ is labelled with repetitions, we may have $t_{v_1}= t_{v_2}$ for distinct vertices $v_1,v_2$ of $\Delta$. However, if we prove that $z_\sigma=\pi(1-\pi)(1-2\pi)^{\dim{\sigma}-1}\prod_{i\in \sigma}(u_i-t_i)$ holds, when all $u_v,t_v$ are distinct variables, then the statement will follow simply by specializing $u$'s and $t$'s. Hence, we may assume that to every $v\in \Delta$ we have associated two independent variables $u_v$ and $t_v$. 
Next we apply the methods introduced in \cite{MOZ}.

Step 1: We show that $u_v-t_v$ divides $y_\sigma$ for every $v\in\sigma$. In other words we prove that if $u_v=t_v$ then $y_\sigma=0$. Assume $u_v=t_v$ and $v\in\sigma'\subseteq\sigma$. Then also $x_v=u_v=t_v$ and we have:
$$x_{\sigma'}\prod_{j\in\sigma\setminus\sigma'}x_j=\left(\pi\prod_{i\in\sigma'}t_i+(1-\pi)\prod_{i\in\sigma'}u_i\right)\prod_{j\in\sigma\setminus\sigma'}x_j=$$
$$=\left(\pi\prod_{i\in\sigma'\setminus\{v\}}t_i+(1-\pi)\prod_{i\in\sigma'\setminus\{v\}}u_i\right)\prod_{j\in(\sigma\setminus\sigma')\cup\{v\}}x_j=x_{\sigma'\setminus \{v\}}\prod_{j\in(\sigma\setminus\sigma')\cup\{v\}}x_j.$$
For $v\in \sigma'\subseteq \sigma$ let $\sigma''=\sigma'\setminus\{v\}$. Therefore, the contributions of $x_{\sigma'}\prod_{j\in\sigma\setminus\sigma'}x_j$ and $x_{\sigma''}\prod_{j\in\sigma\setminus\sigma''}x_j$ to $y_\sigma$ cancel, which proves that $y_{\sigma}=0$.

Step 2: We show that $y_\sigma=\left(\pi(1-\pi)^{1+\dim\sigma}+(-\pi)^{1+\dim\sigma}(1-\pi)\right)\prod_{i\in\sigma}(t_i-u_i)$. By Step $1$, we know that $y_\sigma$ is divisible by $\prod_{i\in\sigma}(u_i-t_i)$. Further, as $y_\sigma$ is homogeneous of degree $d:=1+\dim\sigma$ in $u_i$, $t_i$, we know that it is of the form $P(\pi)\prod_{i\in\sigma}(u_i-t_i)$ for some polynomial $P(\pi)$. To determine $P(\pi)$ we substitute $u_i=0$ and $t_i=1$. Then $x_{\sigma'}=\pi$ for every $\sigma'\neq\emptyset$. We obtain:
$$y_\sigma=(-\pi)^d+\sum_{\emptyset\neq\sigma'\subset\sigma} (-1)^{\dim\sigma+\dim\sigma'}\pi^{d-\dim\sigma'}=$$
$$(-\pi)^d+(-1)^{\dim\sigma}\sum_{k=1}^{d}(-1)^{k+1}{{d}\choose{k}}\pi^{d-k+1}=(-\pi)^d+\pi((1-\pi)^{d}-(-\pi)^{d})=$$
$$=\left( \pi(1-\pi)^{d}+(-\pi)^{d}(1-\pi) \right)$$
Step 3: The proof of the final formula, due to Step 2, is just a computation on polynomials in one variable --- we leave it for the reader.
\end{proof}

Denote by $\widehat T_{\Delta}$ the affine cone over $T_{\Delta}$. The following result provides our main application of simplicial cumulants. 

\begin{thm}\label{main}
The secant variety $\sigma_2(V_\Delta)$ is isomorphic to the product of $\CC^n$ and the variety $\widehat T_{\Delta}$. %In particular the secant variety is locally  isomorphic to the trivial affine bundle of rank $n$ over the variety $T_{\geq2}$.
\end{thm}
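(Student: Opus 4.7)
The plan is to work in the simplicial cumulant coordinates $(z_\sigma)$, where Lemma~\ref{lem:sec} gives an explicit parameterization of $\sigma_2(V_\Delta)$. Since the passage from $(x_\sigma)$ to $(z_\sigma)$ is a polynomial automorphism of $\CC^N$ (each $z_\sigma$ equals $x_\sigma$ plus a polynomial in the variables $x_{\sigma'}$ with $\sigma'\subsetneq\sigma$), it suffices to establish the isomorphism after this change of coordinates.

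I would then reparameterize by introducing the auxiliary variables
$$
w_v\;=\;\pi t_v+(1-\pi)u_v,\qquad \lambda\;=\;\frac{\pi(1-\pi)}{(1-2\pi)^{2}},\qquad \tau_i\;=\;(1-2\pi)(t_i-u_i),
$$
and checking that the assignment $(\pi,t_i,u_i)\mapsto(\lambda,\tau_i,w_v)$ is a dominant rational map $\CC^{2n+1}\to\CC^{2n+1}$: given a generic target, one recovers $\pi$ by inverting the quadratic $\lambda(1-2\pi)^2=\pi(1-\pi)$, and then $(t_v,u_v)$ by solving the resulting linear $2\times 2$ system (of determinant $1$). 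Using $\dim\sigma-1=|\sigma|-2$, the formulas of Lemma~\ref{lem:sec} take the separated form
$$
z_v\;=\;w_v,\qquad z_\sigma\;=\;\lambda\prod_{i\in\sigma}\tau_i\qquad(\sigma\in\Delta_{\geq 2}),
$$
in which the vertex coordinates depend only on the free variables $(w_v)\in\CC^n$ and the higher-dimensional coordinates depend only on $(\lambda,\tau_1,\dots,\tau_n)$. Consequently the image closure factors as a direct product.

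It remains to identify the Zariski closure of the image of $(\lambda,\tau_1,\dots,\tau_n)\mapsto(\lambda\prod_{i\in\sigma}\tau_i)_{\sigma\in\Delta_{\geq 2}}$ with $\widehat T_\Delta$. This is the standard affine-cone construction in the toric setting: augmenting the lattice points defining $T_\Delta$ (i.e.\ the indicator vectors of the simplices in $\Delta_{\geq 2}$) by an additional constant coordinate equal to $1$ produces precisely the above parameterization, which defines the affine cone over the (in general weighted) projectivization of $T_\Delta$. I expect this toric identification to be the main technical point, since it requires pinning down the meaning of the affine cone over a weighted-homogeneous toric variety; once it is granted, combining it with the factorization of the previous paragraph yields $\sigma_2(V_\Delta)\cong\CC^n\times\widehat T_\Delta$.
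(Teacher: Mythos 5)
Your proposal follows essentially the same route as the paper: its proof applies Lemma~\ref{lem:sec} and then makes exactly your substitution $\pi'=\pi(1-\pi)/(1-2\pi)^2$, $u'_v=(u_v-t_v)(1-2\pi)$ (keeping $t'_v=t_v$ where you use $w_v=z_v$), observing that the coordinates $z_\sigma$ for $\dim\sigma>0$ become $\pi'$ times the monomials parameterizing $T_\Delta$, i.e.\ the parameterization of $\widehat T_\Delta$, while each $z_v$ carries its own independent parameter. The only quibble is that your $2\times 2$ system in $(t_v,u_v)$ has determinant $-(1-2\pi)$ rather than $1$; this is still generically nonzero, so the dominance argument stands.
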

%In the next section we study the variety $T_{\Delta}$ in the square-free case.
\begin{proof}
By Lemma \ref{lem:sec}, the variety $\sigma_2(V_\Delta)$ is isomorphic to the closure of the image of the map:
$$
\begin{array}{ll}
z_v=\pi t_v+(1-\pi) u_v & \mbox{if }\dim(v)=0,\\
z_\sigma=\pi(1-\pi)(1-2\pi)^{\dim{\sigma}-1}\prod_{i\in \sigma}(u_i-t_i) & \mbox{if }\dim(\sigma)\geq 1.
\end{array}
$$
Make a change of parameter variables: $t'_v=t_v$, $\pi'=\pi(1-\pi)/(1-2\pi)^2$ and $u'_v=(u_v-t_v)(1-2\pi)$. Then if $\dim \sigma>0$ the coordinate $z_\sigma$ becomes a monomial parameterizing $T_{\Delta}$ times $\pi'$. This gives the parametrization of $\widehat T_{\Delta}$. Further, $z_v$ is the only coordinate in which $t'_v$ appears, thus it is independent from the other coordinates.
\end{proof}
We define the \emph{tangential variety} of a variety $Y$ as the union of all tangent lines to $Y$. In a similar way as above one can prove the following theorem.
\begin{thm}\label{main-tangent}
The tangential variety of $V_\Delta$ is isomorphic to the product of $\CC^n$ and the variety $T_{\Delta}$. 
\end{thm}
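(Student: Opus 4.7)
The plan is to mirror the proof of Theorem~\ref{main} with a tangential analogue of Lemma~\ref{lem:sec}. An affine tangent line to $V_\Delta$ at $e_\Delta(t)$ in the direction $\sum_{j}v_j\,\partial_{t_j}e_\Delta$ is parametrized by
$$x_\sigma\;=\;\prod_{i\in\sigma}t_i\;+\;s\sum_{j\in\sigma}v_j\prod_{i\in\sigma\setminus\{j\}}t_i,\qquad (t,v,s)\in\CC^{2n+1}.$$
Since $s$ and $v$ enter only through the products $w_j:=sv_j$, the tangential variety is the closure of the image of a map $\CC^{2n}\to\CC^N$ in the variables $(t,w)$, which already matches the expected dimension of $\CC^n\times T_\Delta$.

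Next I would compute the simplicial cumulants of this parametrization. The pairing argument of Step~1 in the proof of Lemma~\ref{lem:sec} carries over with ``$u_v=t_v$'' replaced by ``$v_v=0$'': setting $v_j=0$ makes $x_j=t_j$, and for $j\in\sigma$ each subsimplex $\sigma'\not\ni j$ pairs with $\sigma'\cup\{j\}$ to give canceling contributions in the defining sum of $y_\sigma$. Hence $y_\sigma$ is divisible by $w_j$ for every $j\in\sigma$. Homogeneity of $y_\sigma$ of degree $|\sigma|$ in the variables $(t,w)$ then forces
$$y_\sigma\;=\;C_{|\sigma|}\prod_{i\in\sigma}w_i,$$
and evaluating at $t=0$ (where $x_v=w_v$ and $x_\sigma=0$ for $|\sigma|\geq 2$) gives $C_d=(-1)^{d-1}(d-1)$. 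Plugging into the definition of $z_\sigma$ and pulling out $\prod_iw_i$ reduces $z_\sigma$ to a constant multiple of this monomial. Either evaluating the resulting sum over thick interval partitions directly or, more transparently, taking the limit of the secant formula in Lemma~\ref{lem:sec} under the substitutions $u_v=t_v+\epsilon v_v$, $\pi=1-s/\epsilon$, $\epsilon\to 0$, gives $z_\sigma=(-1)^{d+1}2^{d-2}\prod_{i\in\sigma}w_i$ for $d=|\sigma|\geq 2$; the crucial feature is that this scalar is nonzero.

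To conclude, apply the linear rescaling $z_\sigma\mapsto((-1)^{d+1}2^{d-2})^{-1}z_\sigma$ of $\CC^N$ and set $t'_v:=z_v=t_v+w_v$. In these new coordinates the tangential variety is the image of the map $z_v=t'_v$ and $z_\sigma=\prod_{i\in\sigma}w_i$ for $\sigma\in\Delta_{\geq 2}$, with $(t'_v)$ and $(w_v)$ entering independently. This is precisely the parametrization of $\CC^n\times T_\Delta$, with the first factor having coordinates $t'_v$ and the second factor parametrized by $w\in\CC^n$. The main novelty compared to the proof of Theorem~\ref{main} is the absence of a free scaling parameter analogous to $\pi'$, which is exactly what upgrades the second factor from the affine cone $\widehat{T}_\Delta$ to $T_\Delta$; the main technical point is verifying the nonvanishing of the proportionality constant, which in the secant proof was manifest from the factored form $\pi(1-\pi)(1-2\pi)^{d-2}$ but here requires either an inductive combinatorial check or the observation that the $\epsilon$-powers cancel exactly in the secant limit.
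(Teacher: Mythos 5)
Your proposal is correct and follows exactly the route the paper intends: the paper omits the proof of Theorem~\ref{main-tangent}, saying only that it is proved ``in a similar way'' to Theorem~\ref{main}, and your argument is precisely that adaptation, with the divisibility/homogeneity argument of Lemma~\ref{lem:sec} carried over and the constants $C_d=(-1)^{d-1}(d-1)$ and $(-1)^{d+1}2^{d-2}$ checking out (e.g.\ against the limit $u=t+\epsilon v$, $\pi=1-s/\epsilon$ of the secant formula). Your closing observation — that the absence of the free parameter $\pi'$ is what yields $T_\Delta$ rather than $\widehat T_\Delta$ — is exactly the right point of contrast with Theorem~\ref{main}.
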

\begin{rem}
If $\Delta$ is a graph then $T_{\Delta}=\widehat T_{\Delta}$ and it is exactly the toric variety associated to a graph, defined by Hibi and Ohsugi \cite{Hibi}. 
\end{rem}

We now discuss two extreme examples. The first one concerns the secant of the Veronese variety. 

\begin{exm}\label{ex:veronese}
Let $a,n\in \N$ be positive integers and $V$ a set of $a\cdot n$ vertices such that for each $i=1,\ldots,n$, exactly $a$ vertices are labelled by $x_i$. Let $\Delta$ be the simplicial complex on $V$ satisfying $\sigma\in \Delta$ if and only if $|\sigma\cap V|\leq a$. Then $V_\Delta$ is the (affine) $a$-th Veronese embedding. The tangential (resp.~secant) variety of this Veronese embedding is thus isomorphic to the product of $\CC^n$ and the toric variety (resp.~the affine cone over the toric variety) parameterized by all monomials of degree at least two and at most $a$.

%Let $\Delta$ be a simplex with $a\cdot n$ vertices with vertex set $V$ such that for each $i=1,\ldots,n$, exactly $a$ vertices are labelled by $x_i$. Then $V_\Delta$ is (isomorphic to) the (affine) $a$-th Veronese embedding. The tangential (resp.~secant) variety of this Veronese embedding is thus isomorphic to the product of $\CC^n$ and the (resp.~affine cone over the) toric variety parameterized by all monomials of degree at least two and at most $a$. 
\end{exm}

Our next example shows how Theorem~\ref{main} generalizes the main result of \cite{MOZ}.
\begin{exm}\label{ex:segre}
Consider the set $V=V_1\sqcup \dots \sqcup V_k$, where each $V_i$ is of cardinality $b_i$. We label all elements of $V$ with distinct variables. Let $\Delta$ be the simplicial complex on $V$ satisfying $\sigma\in \Delta$ if and only if $|\sigma\cap V_i|\leq 1$ for all $i=1,\dots, k$. The variety $V_\Delta$ is the (affine) Segre product $\CC^{b_1}\times\dots\times\CC^{b_k}$. Hence, the tangential (resp.~secant) variety of this Segre product is isomorphic to the product of $\CC^{b_1+\cdots+b_k}$ and the toric variety (resp.~the affine cone over the toric variety) parameterized by all monomials of degree at least two and at most one in each set of variables corresponding to each $\CC^{b_i}$. \end{exm}

One obvious way of applying Theorem~\ref{main} is to use toric geometry to study local properties of the secant (and tangential) varieties of toric embeddings of affine spaces given by simplicial complexes. In Sections \ref{sec:Segre-Veronese}-\ref{sec:locus}, we use this to study singularities of the secant variety of the Segre-Veronese embedding.

\section{The Segre-Veronese variety and its secant}\label{sec:Segre-Veronese}

A special case of the construction given in Section~\ref{sec:simplicial} will be used to study the Segre-Veronese variety. Fix $k\in \N$ a positive integer and $\mathbf a,\mathbf b\in \N^k$ where $\mathbf a=(a_1,\ldots,a_k), \mathbf b=(b_1,\ldots,b_k)$ such that $a_i, b_i$ are positive integers. Consider the vertex set $V=V_1\sqcup \cdots \sqcup V_k$, where each $V_i$ has $a_ib_i$ vertices such that for each $j=1,\ldots, b_i$, exactly $a_i$ vertices get labelled $t_{i,j}$. In this section $\Delta_{\rm SV}$ denotes a simplicial complex with vertex set $V$. A subset $\sigma$ of $V$ forms a simplex of $\Delta_{\rm SV}$ if and only if $|\sigma\cap V_i|\leq a_i$ for all $i=1,\ldots,k$.  In particular, if $k=1$ then we get the simplicial complex of Example~\ref{ex:veronese} and if $\mathbf a=(1,\ldots,1)$, we get the simplicial complex of Example~\ref{ex:segre}. 

If $\sigma\in \Delta_{\rm SV}$ then $\sigma=\sigma_1\sqcup\cdots\sqcup \sigma_k$, where each $\sigma_i$ is a multiset of labels $t_{i,j}$, where $|\sigma_i|\leq a_i$. Let $n=b_1+\cdots +b_k$ and $N$ be the number of simplices in $\Delta_{\rm SV}$. The toric embedding $e_{\Delta_{\rm SV}}:\C^n\to \C^N$ is given by 
$$
x_\sigma\;\;=\;\;\prod_{i=1}^k\prod_{j\in \sigma_i}t_{i,j}\qquad\mbox{for all}\quad\sigma\in \Delta_{\rm SV}.$$
The corresponding projective variety is obtained by introducing additional variables $t_{i,0}$ for $i=1,\ldots,k$ (the coordinates of each $\P^{b_i}$ are $(t_{i,0},\ldots,t_{i,b_i})$) and considering now a homogeneous parameterization $\P^{b_1}\times \cdots\times \P^{b_k}\to \P^{N}$
$$
x_\sigma\;\;=\;\;\prod_{i=1}^kt_{i,0}^{a_i-|\sigma_i|}\prod_{j\in \sigma_i}t_{i,j}\qquad\mbox{for all}\quad\sigma\in \Delta_{\rm SV}.$$
The image of this map is the \emph{Segre-Veronese variety} 
$$X\;:=\;v_{a_1}(\P^{b_1})\times\dots\times v_{a_k}(\P^{b_k}),$$ 
which is the embedding of the product $\P^{b_1}\times\dots\times\P^{b_k}$ given by the very ample line bundle $\o(a_1,\ldots,a_k)$. \textbf{From this point onward $\Delta_{\rm SV}$ will always be denoted by $\Delta$.}
\begin{rem}\label{rem:cover}
	The original affine variety $V_{\Delta}$ is isomorphic to the open subset of the Segre-Veronese variety obtained by setting $t_{i,0}\neq 0$ for all $i=1,\ldots,k$. This amounts to assuming $x_\emptyset\neq 0$. The Segre-Veronese variety can be covered by such varieties obtained by assuming that exactly one variable $t_{i,j}$ for each $i=1,\ldots,k$ is necessarily nonzero, or in other words, that a given coordinate $x_\sigma$ is nonzero.  
\end{rem}

\begin{exm}
	Suppose $\mathbf a=(2,2,1)$, $\mathbf b=(1,2,2)$. Then $V_1=\{t_{1,1},t_{1,1}\}$, $V_2=\{t_{2,1},t_{2,1},t_{2,2},t_{2,2}\}$, $V_3=\{t_{3,1},t_{3,2}\}$. The maximal simplices in $\Delta$ are 
	$$\{t_{1,1},t_{1,1},t_{2,1},t_{2,1},t_{3,1}\}\qquad \{t_{1,1},t_{1,1},t_{2,1},t_{2,2},t_{3,1}\}\qquad \{t_{1,1},t_{1,1},t_{2,2},t_{2,2},t_{3,1}\}$$
	$$\{t_{1,1},t_{1,1},t_{2,1},t_{2,1},t_{3,2}\}\qquad \{t_{1,1},t_{1,1},t_{2,1},t_{2,2},t_{3,2}\}\qquad \{t_{1,1},t_{1,1},t_{2,2},t_{2,2},t_{3,2}\}$$

	A lower-dimensional simplex $\sigma=\{t_{1,1},t_{1,1},t_{2,1},t_{2,2}\}$ gives the monomial $x_\sigma=t_{1,1}^2t_{2,1}t_{2,2}$, which can be more compactly written as $\mathbf t^{\mathbf c_\sigma}$ for $\mathbf c_\sigma=(2,1,1,0,0)$. The corresponding homogeneous version is  $x_\sigma=t_{1,1}^2t_{2,1}t_{2,2}t_{3,0}$.\end{exm}

%\subsection{The polytope of $T_{\Delta}$}
In this and the following section we study the secant variety $\sigma_2(X)$ of the Segre-Veronese variety $X$. We rely on the following crucial observation. 
\begin{rem}\label{rem:covered}
	Consider the open subset of $\sigma_2(X)$ given by $\sigma_2(X)\cap \{x_\emptyset\neq 0\}$. On this subset $\sigma_2(X)$ is isomorphic to the secant of the affine variety $V_{\Delta}$. By Theorem~\ref{main} this (affine) secant variety is isomorphic to the product of $\C^n$ and the variety $\widehat T_\Delta$ associated to the simplicial complex $\Delta$. This means that $\sigma_2(X)$ can be covered by toric varieties, which is our main motivation to study the variety $\widehat T_{\Delta}$. 
\end{rem}

By following Remark~\ref{rem:covered}, we define the associated polytope $P$ in $\R^n$. For every $\sigma\in \Delta$, let $\mathbf c_\sigma\in \N^{n}$ be the vector of powers of $t_{i,j}$ in the parameterization of $x_\sigma$. The polytope associated to the affine toric variety $T_{\Delta}$ is $$P\;=\;{\rm conv}\{\mathbf c_\sigma\in \N^n:\,\sigma\in \Delta,\dim(\sigma)>0\}.$$ A cone over $P$, denoted by $C_P$, is the cone generated by vectors in $\{1\}\times P\subset \mathbb R^{n+1}$. The extra coordinate is denoted by $x_0$. Note that $P$ is normal, if and only if the variety $\widehat T_{\Delta}$ is normal, if and only if the (projective) secant variety is normal. Therefore, first we want to prove that $P$ is a normal polytope. To this end, let $L_P\subset \R^{n+1}$ be the lattice generated by $\{1\}\times P$, i.e.~those points that are integral combinations of lattice points of $\{1\}\times P$. Always $L_p\subseteq\Z^{n+1}$ and often equality holds. Recall that a polytope $P$ is called \emph{normal} if for every point $q\in L_P\cap C_P$ we have $q=\sum_{i=1}^s p_i$ for some $p_i\in (\{1\}\times P)\cap\Z^{n+1}$. Note that we must have $q_0=s$, where $q_0$ is the first coordinate of $q$, and we may identify $q$ with a point in $sP$. 

%\textcolor{blue}{I leave the comment of Piotr here, Mateusz please read the preceding paragraph carefully and make it beautiful:} \textcolor{red}{P: This $P$ is associated to $T_\Delta$ rather than $\widehat T_\Delta$ so we need some clarification here. I think that we messed up this part a little bit. The fact that $P$ gives some information about $\widehat T_\Delta$ follows from the fact that is normal so normality should be proved before we claim that this polytope is associated to $T_\Delta$. When we know it is normal we say that it defines a projective variety and then...}

%The polytope $P$ contains some essential information about $\widehat T_{\Delta}$. Let $L_P\subset \R^{n+1}$ be the lattice generated by $P$, i.e.~those points that are integral combinations of lattice points of $\{1\}\times P$. Always $L_p\subseteq\Z^{n+1}$ and often equality holds.
%A polytope $P$ is called \emph{normal} if for every point $q\in L_P\cap C_P$ we have $q=\sum_{i=1}^s p_i$ for some $p_i\in (\{1\}\times P)\cap\Z^{n+1}$. Note that we must have $q_0=s$, where $q_0$ is the first coordinate of $q$, and we may identify $q$ with a point in $sP$. 

\begin{lema}\label{lem:Pnormal}
	The polytope $P$ is a normal polytope and $C_P$ is defined by the following set of inequalities:
		\begin{itemize}
		\item [(1)] $x_{i,j}\geq 0$ for all $i=1,\ldots,k$, $j=1,\ldots,b_i$,
		\item [(2)] $\sum_{j} x_{i,j}\leq a_ix_0$ for all $i=1,\ldots,k$, and
		\item [(3)] $\sum_{i,j} x_{i,j}\geq 2x_0$.
	\end{itemize}
\end{lema}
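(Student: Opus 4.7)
The plan is to establish both parts of the lemma simultaneously by introducing $Q$, the rational cone in $\R^{n+1}$ cut out by the inequalities (1)--(3), and showing directly that every integer point of $Q$ is a nonnegative integer sum of integer points of $\{1\}\times P$. The inclusion $C_P\subseteq Q$ is immediate: a generator $(1,\mathbf c_\sigma)$ with $\sigma\in\Delta_{\geq 2}$ satisfies (1) because $\mathbf c_\sigma\in\N^n$, satisfies (2) because $|\sigma_i|\leq a_i$, and satisfies (3) because $|\sigma|\geq 2$. Conversely, the lattice points of $\{1\}\times P$ are precisely the integer points at level $x_0=1$ of $Q$: given $\mathbf c\in\N^n$ with (1)--(3), reading $c_{i,j}$ as the multiplicity of label $t_{i,j}$ produces a simplex of $\Delta$ of cardinality at least $2$, hence of positive dimension. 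Once every integer point of $Q$ is shown to lie in $C_P$, a density argument on rational points forces $C_P=Q$ and the normality statement is then exactly what was proved.

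The main work is an induction on $q_0$: for $q=(q_0,(q_{i,j}))\in Q\cap\Z^{n+1}$ with $q_0\geq 2$, I will extract a level-one lattice point $p=(1,\mathbf c)$ of $\{1\}\times P$ with $q-p\in Q$ and then iterate. Write $q_i:=\sum_j q_{i,j}$ and $\widetilde Q:=\sum_i q_i$, and set
$$l_i:=\max\bigl(0,\,q_i-a_i(q_0-1)\bigr),\qquad u_i:=\min(a_i,\,q_i).$$
Requiring simultaneously $p\in\{1\}\times P$ and $q-p\in Q$ translates to picking integers $c_i\in[l_i,u_i]$ (intended as the row-sums $\sum_j c_{i,j}$) with
$$\max\bigl(2,\,{\textstyle\sum_i l_i}\bigr)\;\leq\;{\textstyle\sum_i c_i}\;\leq\;\min\bigl({\textstyle\sum_i u_i},\;\widetilde Q-2(q_0-1)\bigr).$$
Once such $c_i$ are chosen, distributing each into nonnegative integers $c_{i,j}\leq q_{i,j}$ summing to $c_i$ is automatic by a greedy assignment, as $c_i\leq q_i=\sum_j q_{i,j}$.

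What remains is non-emptiness of the target interval for $\sum_i c_i$, which reduces to two estimates: (A) $\sum_i u_i\geq 2$, and (B) $\sum_i l_i\leq \widetilde Q-2(q_0-1)$. Both will follow from $q_i\leq a_iq_0$, $\widetilde Q\geq 2q_0$, and $q_0\geq 2$ by a short case analysis, and this bookkeeping is the main obstacle. For (A), the only way to have $\sum u_i\leq 1$ is that all $q_i$ vanish except at a single index $i_0$ with $u_{i_0}=1$; then either $a_{i_0}=1$, which forces $\widetilde Q=q_{i_0}\leq q_0<2q_0$, or $q_{i_0}=1$, which forces $\widetilde Q=1<4\leq 2q_0$, in both cases contradicting $\widetilde Q\geq 2q_0$. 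For (B), splitting over $S:=\{i:q_i>a_i(q_0-1)\}$ rewrites the inequality as $\sum_{i\notin S}q_i\geq (q_0-1)\bigl(2-\sum_{i\in S}a_i\bigr)$, which is trivial when $\sum_{i\in S}a_i\geq 2$; the remaining cases are $S=\emptyset$ (where $\sum l_i=0$) and $|S|=1$ with $a_{i_0}=1$, where $q_{i_0}=q_0$ is forced and the inequality follows from $\widetilde Q\geq 2q_0$. With both estimates in hand, the induction closes and the lemma follows.
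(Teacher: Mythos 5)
Your proposal is correct and follows essentially the same route as the paper: both prove the facet description and normality simultaneously by induction on the height $q_0$, peeling off one lattice point of $\{1\}\times P$ at each step so that the remainder still satisfies (1)--(3). The only difference is organizational --- you reduce the inductive step to the two clean interval conditions (A) and (B) on the row sums, whereas the paper splits into cases according to how many type-(2) inequalities $q-e_0$ violates; your bookkeeping is arguably tighter, but the underlying decomposition is identical.
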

\begin{proof}
Let $\bar{C}$ denote the cone defined by the above inequalities $(1)-(3)$. We prove that $\bar{C}=C_P$. It is easily seen that $\bar{C}\supseteq C_P$. The reverse inclusion will be proved together with normality of $P$. %Therefore, let us first prove that $P$ is normal or, in other words, 
We prove, by induction on $m$, that any integral point $q\in \bar{C}$ with $q_0=m$ is a sum of $m$ integral points of $\{1\}\times P$. %We prove it by induction on $n$. 
The case $m=1$ is clear by definition of $P$. %, since the integral points of $C_P\cap \{x_0=1\}$ are the same as integral points of $\{1\}\times P$. 
Now, assume that $m>1$ and $e_0=(1,0,\dots,0) \in \Z \times \Z^n$. If $q'=q-e_0$ satisfies inequalities of type (2), then $\sum_j q_{i,j}\leq (m-1)a_i$ for all $i=1,\ldots,k$. Hence, by using the fact that $q$ satisfies (3), we may write $q=q^1+q^2$, where $q^2_0=1$ and $q^2$ has exactly two other coordinates equal to $1$ or one coordinate equal to $2$, and all others are zero in both cases. This implies, by induction, that $q^1$ is a sum of $(m-1)$ lattice points in $\{1\}\times P$ and $q^2\in\{1\}\times P$.
%Hence, by using the fact that $q$ satisfies (3), we may write $q=q_1+q_2$, where $q_2$ has exactly three coordinates equal to $1$, or one coordinate equal to 1 and one equal to $2$, and all others are zero in both cases. This implies that $q_1\in (n-1)(\{1\}\times P)$ and $q_2\in\{1\}\times P$.

If $q'=q-e_0$ does not satisfy $l\geq 2$ inequalities of type (2), then for these $l$ indices $i$ it holds that $(m-1)a_i<\sum_{j}x_{i,j}\leq m a_i$. In consequence, we may write $q=q^1+q^2$, where $q^1$ satisfies the given inequalities, i.e.~by induction it is a sum of $(m-1)$ lattice points of $\{1\}\times P$ and $q^2\in \{1\}\times P$, where $q^2$ has nonzero entries only in coordinates $x_{i,j}$ for $i$ indexing the inequalities not satisfied by $q'$. 

Finally, if $q'$ does not satisfy exactly one inequality of type (2), say indexed by $i_0$, but also $a_{i_0}>1$, then we may conclude as in the previous case. If $a_{i_0}=1$, then we may find $i_1\neq i_0$ such that $q_{i_1,j_1}\geq 1$ for some $j_1$ as $q$ satisfies inequality (3). We may then write $q=q^1+q^2$, where $q^2_0=1$, another coordinate of $q^2$ is equal to one only in one of the variables $x_{i_0,j_0}$ (for some $j_0$) and third nonzero coordinate in variable $x_{i_1,j_1}$. This completes the proof.
%Note that this proof works for $\bar{C}$ as well, proving ray generators of $\bar{C}$ belongs to $C_P$. Hence $C_P = \bar{C}$. This completes the proof.
\end{proof}

Lemma~\ref{lem:Pnormal} together with Remark~\ref{rem:covered} give us the following proposition.
\begin{obs}\label{prop:Pnormal}
	The variety $\sigma_2(X)$ is covered by toric varieties isomorphic to a product of an affine space of dimension $n=\sum_{i=1}^k b_i$ and the normal toric variety $\widehat T_\Delta$.  \end{obs}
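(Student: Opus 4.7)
The plan is to simply assemble three facts already established in the excerpt; no new heavy computation is needed.

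First, I would invoke Remark~\ref{rem:cover} to obtain a covering of the Segre-Veronese variety $X$ by $\prod_{i=1}^k(b_i+1)$ principal affine open subsets, one for each choice of a distinguished variable $t_{i,j_i}$ in each factor. Because the simplicial complex $\Delta$ is defined by the symmetric constraints $|\sigma\cap V_i|\leq a_i$ (and thus is invariant under permuting the labels within each $V_i$ and also under swapping the role of $t_{i,0}$ with any other $t_{i,j}$), each of these affine charts of $X$ is isomorphic to the affine variety $V_\Delta$. Taking the intersection of each chart with $\sigma_2(X)$ yields a cover of $\sigma_2(X)$ by open sets, each isomorphic to the affine secant $\sigma_2(V_\Delta)$; this is exactly the content of Remark~\ref{rem:covered}.

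Second, I would apply Theorem~\ref{main} to each chart, obtaining
\[
\sigma_2(X)\cap\{x_\sigma\neq 0\}\;\cong\;\sigma_2(V_\Delta)\;\cong\;\mathbb C^n\times \widehat T_\Delta,
\]
with $n=\sum_{i=1}^k b_i$. This already gives the covering by products of an affine space and the toric variety $\widehat T_\Delta$.

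Third, I would deduce normality of $\widehat T_\Delta$ from Lemma~\ref{lem:Pnormal} via the standard toric dictionary: the affine toric variety associated to a lattice polytope $P$ is normal if and only if $P$ is a normal polytope (equivalently, its semigroup of lattice points in the cone $C_P$ is saturated). Since $\widehat T_\Delta$ is precisely the affine toric variety associated to $P$, Lemma~\ref{lem:Pnormal} yields normality of $\widehat T_\Delta$. Combining the three ingredients proves the proposition.

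The proof is essentially a bookkeeping argument, and the only step that required genuine work was Lemma~\ref{lem:Pnormal} itself; everything else has been set up so that this corollary is immediate. The mildest potential pitfall is the compatibility of secant formation with the principal-open restriction in the first step, which is standard since $\sigma_2(X)$ is defined as the closure of a construction taking place in projective space and restricts cleanly to each affine chart $\{x_\sigma\neq 0\}$ of $\mathbb P^N$.
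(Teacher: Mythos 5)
Your proof is correct and follows the same route as the paper: the authors state the proposition as an immediate consequence of Remark~\ref{rem:covered} (which itself rests on Remark~\ref{rem:cover} and Theorem~\ref{main}) together with Lemma~\ref{lem:Pnormal} and the standard equivalence between normality of the polytope $P$ and normality of the associated affine toric variety. Your additional remarks on the symmetry of $\Delta$ under relabelling within each $V_i$ and on the compatibility of secants with the affine charts merely make explicit what the paper leaves implicit.
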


In the remainder of this paper we obtain more information about $\widehat T_\Delta$. For this we need the description of the facets of $P$. Denote the coordinates of the ambient space by $x=(x_{i,j})\in \R^n$ and write:
\begin{itemize}
	\item [(a)] $F=\{x\in P:\;\sum_{i,j} x_{i,j}=2\}$,
	\item [(b)] $R_i=\{x\in P:\sum_{j=1}^{b_i} x_{i,j}=a_i\}$ for $1\leq i\leq k$, and
	\item [(c)] $Z_{i,j}=\{x\in P: x_{i,j}=0\}$ for $1\leq i\leq k$, $1\leq j\leq b_i$.
\end{itemize}
Recall that $n=b_1+\cdots +b_k$. Let $\mathcal I=\{(i,j):1\leq i\leq k, 1\leq j\leq b_i\}$ and note that $|\mathcal I|=n$. 
The canonical unit vectors of $\mathbb R^n$ are denoted by $e_{i,j}$ where $(i,j)\in \mathcal I$. We follow the convention that the elements of $\mathcal I$ are ordered lexicographically. Moreover, from this point onward, without loss of generality, we assume that $a_1\leq a_2\leq \ldots \leq a_k$.

\begin{obs}\label{pSVF}
We have $\dim P=n$, i.e.~the secant variety is of expected dimension $2n+1$, unless: 
\begin{enumerate}
	\item [(D1)] $k=2$, $\mathbf a=(1,1)$,
	\item [(D2)] $k=1$, $a\leq 2$
%	\item [(D3)] $k=1$, $a=2$, $b =1$, or
%	\item [(D4)] $k=1$, $a=2$, $b =2$.
\end{enumerate}
in which case $P=F$.
In the full dimensional case all the sets $F, R_i$ and $Z_{i,j}$ define facets of $P$ unless:
\begin{enumerate}
	\item [(E1)] $k=3$, $b_{i}=1$ for some $i$ and $a_j=1$ for all $j\neq i$, when $Z_{i,1}$  is not a facet,
	%in which case $Z_{i_0,1}$ forms a proper face contained in $F$.
	%\item [(2)] $k=2$, $a_1=a_2=1$, $b_i\geq 2$ for $i\neq i_0$,
	\item [(E2)] $k=2$, $b_1=1$, $a_1\leq 2$, $a_{2}= 2$, when $Z_{1,1}$ is not a facet,
	\item [(E3)] $k=2$, $b_2=1$, $a_1\leq 2$, $a_{2}\geq 2$, when $Z_{2,1}$ is not a facet, and
	\item [(E4)] $k=1$, $b=1$, $a\geq 3$, when $Z_{1,1}$ is not a facet.
\end{enumerate}
%All the sets $F, R_i$ and $Z_{i,j}$ define facets of $P$ unless $b_{i_0}=1$ for some $1\leq i_0\leq k$ and:
%\begin{enumerate}
%	\item [(1)] $k=3$, $a_i=1$ for all $i\neq i_0$ in which case $Z_{i_0,1}$ forms a proper face contained in $F$.
%	\item [(2)] $k=2$, $a_1=a_2=1$, $b_i\geq 2$ for $i\neq i_0$,
%	\item [(3)] $k=2$, $a_{i_0}\geq 2$, $a_i= 1$ for $i\neq i_0$,
%	\item [(4)] $k=2$, $b_{i}=1$, $a_i= 2$ for $i\neq i_0$,
%	\item [(5)] $k=1$, $a_{i_0}\geq 3$,
%\end{enumerate}
%in which case $Z_{i_0,1}$ does not form a facet of $P$.
\end{obs}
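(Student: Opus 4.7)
The approach is to use the inequality description of $C_P$ from Lemma \ref{lem:Pnormal}: $\dim P = \dim C_P - 1$, and among the listed inequalities the facet-defining ones are detected either by exhibiting $n$ affinely independent lattice points in the candidate face, or, when the candidate is not a facet, by trapping it inside the intersection of two or more other facets. The main obstacle will be the case analysis for $Z_{i,j}$ in the subcase $b_i = 1$: one must match the ``degenerate reduced'' parameters against the list E1--E4 while respecting the ordering convention $a_1 \leq \dots \leq a_k$, which is precisely what distinguishes E2 from E3.

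First I would settle the dimension. The cone $C_P$ is full-dimensional iff the strict version of the system in Lemma \ref{lem:Pnormal} is feasible. Setting $x_0 = 1$ and chaining inequalities (2) and (3) yields $\sum_i a_i > \sum_{i,j} x_{i,j} > 2$, so strict feasibility forces $\sum_i a_i \geq 3$; conversely, $x_{i,j} = \alpha\, a_i / b_i$ with $\alpha \in (2/\sum_i a_i,\,1)$ is a strict solution when $\sum_i a_i \geq 3$. Hence $\dim P = n$ iff $\sum_i a_i \geq 3$. The remaining nonempty cases with $\sum_i a_i \leq 2$ are exactly D1 and D2, and in both every simplex of $\Delta_{\geq 2}$ has cardinality exactly $2$, so every $\mathbf c_\sigma$ satisfies $\sum x_{i,j} = 2$, giving $P \subseteq F$ and hence $P = F$.

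Assume now $\sum_i a_i \geq 3$. For $F$, pick a pivot $(i_0,1)$: when some $a_{i_0} \geq 2$ the lattice points $2 e_{i_0,1}$ together with $\{ e_{i_0,1} + e_{i,j} : (i,j) \neq (i_0,1) \}$ give $n$ points whose differences $\{ e_{i,j} - e_{i_0,1} \}$ form a basis of $\{ v : \sum v_{i,j} = 0 \}$; when instead all $a_l = 1$, non-degeneracy gives $k \geq 3$, so one replaces $2 e_{i_0,1}$ by the auxiliary point $e_{i_1,1} + e_{i_2,1}$ with $i_1, i_2 \neq i_0$ and adjoins one further three-vertex point to recover the missing direction. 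A parallel construction works for each $R_i$: when $a_i \geq 2$ use the pivot $a_i e_{i,1}$ together with $a_i e_{i,1} + e_{i',j'}$ for $(i',j')$ outside block $i$ and $(a_i - 1) e_{i,1} + e_{i,j}$ for $j \geq 2$; when $a_i = 1$ the full-dimensionality of $P$ forces either some other $a_{i_0} \geq 2$ or $k \geq 3$, and in either case one combines two- and three-vertex lattice points to obtain $n$ affinely independent points inside the hyperplane $\sum_j x_{i,j} = 1$.

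Finally, for $Z_{i,j}$ I would use a reduction. After deleting the coordinate $x_{i,j}$, the lattice points of $Z_{i,j}$ are exactly those of the Segre--Veronese polytope $P'$ for the reduced data: $\mathbf a$ unchanged and $b_i \mapsto b_i - 1$ if $b_i \geq 2$, or block $i$ entirely removed if $b_i = 1$. Thus $Z_{i,j}$ is a facet iff $P'$ is full-dimensional, iff the reduced parameters avoid D1 and D2. When $b_i \geq 2$ this is automatic, as D1 and D2 depend only on $(\mathbf a, k)$. When $b_i = 1$, $k$ drops by one and $P'$ lies in D1 iff $k = 3$ with $a_l = 1$ for $l \neq i$, which is case E1; and in D2 iff $k = 2$ with the remaining $a$-value at most $2$, which, under the ordering $a_1 \leq a_2$ and the full-dimensionality of $P$, reads as E2 (for $i=1$, forcing $a_2 = 2$) or as E3 (for $i=2$, forcing $a_1 \leq 2$). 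The remaining case $k=1$ is E4, where $Z_{1,1}$ is empty because $\sum x \geq 2$ and $b = 1$ force $x_{1,1} \geq 2$. In each exceptional case one verifies directly that $Z_{i,j}$ is contained in the intersection of another facet with $\{x_{i,j} = 0\}$ (for instance $Z_{1,1} \subseteq R_2 \cap R_3$ in E1 and $Z_{1,1} \subseteq R_2 \cap \{x_{1,1} = 0\}$ in E2), confirming codimension at least two and hence that $Z_{i,j}$ is not a facet.
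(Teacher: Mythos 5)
Your proposal is correct in substance but takes a genuinely different route from the paper on the two most delicate points. For the dimension, the paper exhibits explicit affinely independent lattice points in each of the cases $k\geq 3$, $k=2$, $k=1$, whereas you read $\dim P$ off the halfspace description of $C_P$ from Lemma~\ref{lem:Pnormal} via strict feasibility, obtaining the clean criterion $\dim P=n\iff\sum_i a_i\geq 3$; this is shorter and makes it transparent that D1 and D2 are exactly the degenerate cases. For the faces $Z_{i,j}$, the paper again argues by hand-built point configurations in every subcase, while you identify $Z_{i,j}$ with the Segre--Veronese polytope $P'$ of the reduced data ($b_i\mapsto b_i-1$, or the $i$-th block deleted when $b_i=1$) and then reapply your dimension criterion to $P'$. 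This is the most valuable part of your write-up: it explains \emph{why} the exceptional list is exactly E1--E4 (they are precisely the parameters whose reduction lands in D1 or D2), rather than merely verifying it, and it automatically handles $k\geq 4$. The identification $Z_{i,j}=\conv\{\mathbf c_\sigma:\sigma\not\ni t_{i,j}\}$ is legitimate because $Z_{i,j}$ is a face of $P$ cut out by the valid inequality $x_{i,j}\geq 0$, hence is the convex hull of the lattice generators it contains.

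One step as written would fail and needs repair: in the argument that $F$ is a facet when all $a_l=1$ (so $k\geq 3$), you propose to ``adjoin one further three-vertex point to recover the missing direction.'' A point with coordinate sum $3$ does not lie on $F$, so it cannot contribute to a set of $n$ affinely independent points \emph{inside} $F$; moreover, when $b_{i_0}\geq 2$ your replacement set is short by $b_{i_0}-1$ points, since $e_{i_0,1}+e_{i_0,j}$ is no longer a lattice point of $P$. The fix is the one the paper uses: adjoin the degree-two points $e_{i_0,j}+e_{i_1,1}$ for $2\leq j\leq b_{i_0}$ together with $e_{i_1,1}+e_{i_2,1}$, which do lie in $F$ and supply the missing directions. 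The analogous sketch for $R_i$ with $a_i=1$ is vague but not wrong, since degree-three points such as $e_{i,1}+e_{i_1,1}+e_{i_2,1}$ do lie on $R_i$ in that case.
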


\dow We separately consider cases $k\geq 3$, $k=2$, and $k=1$.
\medskip 

{Case I. $k\geq 3$}

Consider a set of $n$ linearly independent vectors 
\begin{enumerate}
	\item[(1)]  $e_{1,1}+e_{i,j}$ for all $(i,j)\in \mathcal I$ with $i\neq 1$,
	\item[(2)] $e_{1,j}+e_{2,1}$ for all $2\leq j\leq b_1$, and
	\item[(3)] $e_{2,1}+e_{3,1}$
\end{enumerate}
that all lie in $F$. In addition $e_{1,1}+e_{2,1}+e_{3,1}\in P\setminus F$, which shows that $P\subset \mathbb R^n$ is full-dimensional and $F$ is a facet. To show that $R_1$ also forms a facet when $k\geq 3$ (the same proof applies to every $R_i$) note that the following $n$ vectors
\begin{itemize}
	\item [(1)] $a_1e_{1,j}+e_{2,1}$ for all $1\leq j\leq b_1$,
	\item [(2)] $a_1e_{1,1}+e_{i,j}$ for all $(i,j)\in \mathcal I\setminus \{ (2,1)\}$, $i>1$, and
	\item [(3)] $a_1e_{1,1}+e_{2,1}+e_{3,1}$
\end{itemize}
lie in $R_1$ and are linearly independent. Finally, to study the set $Z_{1,1}$ (the same applies to every $Z_{i,j}$) consider the following set of $n-2$ vectors
\begin{itemize}
	\item [(1)] $e_{2,1}+e_{i,j}$ for all $(i,j)\in \mathcal I\setminus \{(1,1)\}$, $i\neq 2$, and
	\item [(2)] $e_{2,j}+e_{3,1}$ for all $2\leq j\leq b_2$.
\end{itemize}
that are linearly independent and lie in $Z_{1,1}$. If $k\geq 4$ we can add to this set $e_{2,1}+e_{3,1}+e_{4,1}$ and $e_{3,1}+e_{4,1}$ to get a set of $n$ affinely independent points in $Z_{1,1}$ and conclude that $Z_{1,1}$ forms a facet in this case. If $k=3$ and $b_1\geq 2$ then we can add $e_{1,2}+e_{3,1}$ and $e_{1,2}+e_{2,1}+e_{3,1}$ to get a set of $n$ affinely independent points in $Z_{1,1}$. If $k=3$, $b_1=1$ %then we consider cases: If $b_2\geq 2$ and $a_2\geq 2$, we can add $e_{2,1}+e_{2,2}$ and $e_{2,1}+e_{2,2}+e_{3,1}$. If $b_2=1$ 
and $a_2\geq 2$ (the same applies if $a_3\geq2$), then we add $2e_{2,1}$ and $2e_{2,1}+e_{3,1}$ to get $n$ affinely independent points in $Z_{1,1}$. 
So the only exceptional case is when $k=3$, $b_1 =a_2=a_3=1$. In this case, the only integer points in $Z_{1,1}$ are of the form $e_{2,j}+e_{3,j'}$. They are all contained in $F$ showing (E1). 
\medskip

{Case II. $k=2$}

Consider the following set of $n-1$ linearly independent points in $P$:
\begin{enumerate}
	\item[(1)] $e_{1,1}+e_{2,j}$ for all $1\leq j\leq b_2$, and
	\item[(2)] $e_{1,j}+e_{2,1}$ for all $2\leq j\leq b_1$.
\end{enumerate}
If $\mathbf a=(1,1)$ there is no larger affinely independent subset in $P$ since $P$ is contained in hyperplanes $\sum_{j}x_{1,j}=\sum_j x_{2,j}=1$ and so $\dim P=n-2$ proving (D1). In this case $P=F=R_1=R_2$.

\underline{Hence, suppose $a_2\geq 2$.} 

In this case we can add $2e_{2,1}$ and $e_{1,1}+2e_{2,1}$ to get a set of $n+1$ affinely independent points in $P$ and so $\dim P=n$. All these points apart from the last one lie in $F$ so it forms a facet of $P$. The set $R_2$ forms a facet because it contains $n$ affinely independent points: $a_2 e_{2,j}$ for all $1\leq j\leq b_2$ and $e_{1,j}+a_2e_{2,1}$ for all $1\leq j\leq b_1$. Similarly, $R_1$ forms a facet because it contains $a_1e_{1,j}+e_{2,1}$ for all $1\leq j\leq b_1$ and $a_1e_{1,1}+e_{2,j}$ for all $2\leq j\leq b_2$ together with $a_1e_{1,1}+2e_{2,1}$. It remains to check when $Z_{i,j}$ form facets of $P$. 

We first show that $Z_{i,j}$ fails to be a facet of $P$ only if $b_i=1$ (in particular $i=j=1$). If $b_1\geq 2$ then $e_{1,2}+e_{2,j}$ for all $1\leq j\leq b_2$ and $e_{1,j}+e_{2,1}$ for all $3\leq j\leq b_1$ together with $2e_{2,1}$ and $e_{1,2}+2e_{2,1}$ give a set of $n$ affinely independent points in $Z_{1,1}$, proving it forms a facet of $P$. The same argument works for each $Z_{1,j}$. If $b_2\geq 2$ then $2e_{2,j}$ for all $2\leq j\leq b_2$ and $e_{1,j}+e_{2,2}$ for all $1\leq j\leq b_1$ together with $e_{1,1}+2e_{2,2}$ give a set of $n$ affinely independent points in $Z_{2,1}$, proving it forms a facet of $P$. The same argument works for each $Z_{2,j}$. This shows that, given $a_2\geq 2$, $b_{i_0}=1$ is a necessary condition for $Z_{i_0,1}$ to fail to be a facet of $P$. We will consider three cases: (1) $b_1=1,b_2\geq 2$, (2) $b_1\geq 2,b_2=1$, and (3) $b_1=b_2=1$. If $b_1=1$ and $b_2\geq 2$ we have two subcases: (i) $a_2=2$, and (ii) $a_2\geq 3$. If $a_2=2$ then $Z_{1,1}$ is contained in hyperplanes $x_{1,1}=\sum_j x_{2,j}-2=0$ and so it does not form a facet of $P$ ($P$ is full-dimensional), which gives part of (E2), call it (E2.1). If $a_2\geq 3$ then $Z_{1,1}$ forms a facet of $P$ because $2e_{2,j}$ for all $1\leq j\leq b_2$ together with $2e_{2,1}+e_{2,2}$ form a set of $n$ affinely independent points in $Z_{1,1}$. Now, if $b_1\geq 2$ and $b_2=1$ then $Z_{2,1}$ is empty if $a_1=1$ and it is contained in hyperplanes $x_{2,1}=\sum_j x_{1,j}-2=0$ if $a_1=2$, which gives part of (E3), call it (E3.1). If $a_1\geq 3$ we proceed as above to show that $Z_{2,1}$ is a facet of $P$. Finally, suppose $b_1=b_2=1$. The only integral points in $Z_{1,1}$ are $(0,2),\ldots,(0,a_2)$, so $Z_{1,1}$ forms a facet if $a_2\geq 3$ and it is a point if $a_2=2$ giving a part of (E2), call it (E2.2). Note that (E2.1) and (E2.2) already give (E2). On the other hand $Z_{2,1}$ is empty if $a_1=1$ and otherwise it contains the following integral points $(2,0),\ldots,(a_1,0)$, so it forms a facet of $P$ if $a_1\geq 3$ and it is a point if $a_1=2$ showing a part of (E3), call it (E3.2). Note that (E3.1) and (E3.2) already give (E3).
\medskip

{Case III. $k=1$}

If $a_1=1$ then $P$ is empty, showing part of (D2), which we refer to later as (D2.1). Suppose $a_1\geq 2$. If $b_1=1$ then the only integer points in $P$ are $2,\ldots,a_1$. If $a_1=2$ then $P$ is a point, showing another part of (D2), call it (D2.2). If $a_1\geq 3$ then $P$ is full dimensional and $F$, $R_1$ are the two facets of $P$. The set $Z_{1,1}$ is empty showing (E4). If $b_1\geq 2$ and $a_1=2$ then $\dim P=n-1$ since $P$ is contained in hyperplane $\sum_j x_{1,j}=2$ and it contains $n$ linearly independent vectors $2e_{1,j}$ for all $1\leq j\leq b_1$. This shows a part of (D2), which we call (D2.3). Note that (D2.1)-(D2.3) already give (D2). Finally, if $b_1\geq 2$ and $a_1\geq 3$ then the points $2e_{1,j}$ for all $1\leq j\leq b_1$ together with $2e_{1,1}+e_{1,2}$ are $n+1$ affinely independent points in $P$, showing $\dim P=n$. The first $n$ of these points lie in $F$, showing it forms a facet. The points $a_1e_{1,j}$ for all $1\leq j\leq b_1$ lie in $R_1$, showing it is also a facet. Also each $Z_{1,j_0}$ forms a facet of the polytope $P$ as it contains $n$ affinely independent points $2e_{1,j}$ for all $j\neq j_0$ and $3e_{1,j}$ for some $j\neq j_0$. This shows (E4).
\kdow

\begin{figure}[htp!]
\begin{minipage}{0.3\textwidth}
		\begin{tikzpicture}[scale=.6]
\begin{axis}[view={105}{30}]
\addplot3[fill=blue,opacity=0.5] coordinates{(1,1,0) (1,2,0) (1,0,2) (1,0,1) (1,1,0)};
\addplot3[fill=red,opacity=0.5] coordinates {(1,1,0) (1,2,0) (0,2,0) (1,1,0)};
\addplot3[fill=orange,opacity=0.5] coordinates {(1,2,0) (0,2,0) (0,0,2) (1,0,2) (1,2,0)};
\addplot3[fill=yellow,opacity=0.5] coordinates {(0,2,0) (0,0,2) (1,0,1) (1,1,0) (0,2,0)};
\addplot3[fill=green,opacity=0.5] coordinates {(0,0,2) (1,0,2) (1,0,1) (0,0,2)};
\end{axis}
\end{tikzpicture}
\begin{center}
	(1)
\end{center}
\end{minipage}
\begin{minipage}{0.3\textwidth}
	\begin{tikzpicture}[scale=.6]
\begin{axis}[view={105}{30}]
\addplot3[fill=blue,opacity=0.5] coordinates{(1,0,2) (1,0,1) (0,1,1) (0,1,2) (1,0,2)};
\addplot3[fill=red,opacity=0.5] coordinates {(0,1,2) (0,0,2) (0,1,1) (0,1,2)};
\addplot3[fill=orange,opacity=0.5] coordinates {(0,1,1) (0,0,2) (1,0,1) (0,1,1)};
\addplot3[fill=yellow,opacity=0.5] coordinates {(1,0,1) (0,0,2) (1,0,2) (1,0,1)};
\addplot3[fill=green,opacity=0.5] coordinates {(1,0,2) (0,1,2) (0,0,2) (1,0,2)};
\end{axis}
\end{tikzpicture}
\begin{center}
	(2)
\end{center}
\end{minipage}
\begin{minipage}{0.3\textwidth}
	\begin{tikzpicture}[scale=.6]
	\begin{axis}[view={105}{30}]
%\addplot3[fill=brown,opacity=0.5]coordinates{(0,2,0) (0,2,1) (0,1,1) (0,2,0)};
%\addplot3[fill=blue,opacity=0.5] coordinates{(3,0,0) (3,2,0) (0,2,0) (2,0,0) (3,0,0)};
%\addplot3[fill=red,opacity=0.5] coordinates {(3,0,0) (3,2,0) (3,2,1) (3,0,1) (3,0,0)};
%\addplot3[fill=orange,opacity=0.5] coordinates {(3,2,0) (0,2,0) (0,2,1) (3,2,1) (3,2,0)};
%\addplot3[fill=yellow,opacity=0.5] coordinates {(0,2,0) (0,1,1) (1,0,1) (2,0,0) (0,2,0)};
%\addplot3[fill=green,opacity=0.5] coordinates {(1,0,1) (3,0,1) (3,0,0) (2,0,0) (1,0,1)};
\addplot3[fill=brown,opacity=0.5]coordinates{(0,2,0) (1,2,0) (1,1,0) (0,2,0)};
\addplot3[fill=blue,opacity=0.5] coordinates{(0,0,3) (0,2,3) (0,2,0) (0,0,2) (0,0,3)};
\addplot3[fill=red,opacity=0.5] coordinates {(0,0,3) (0,2,3) (1,2,3) (1,0,3) (0,0,3)};
\addplot3[fill=orange,opacity=0.5] coordinates {(0,2,3) (0,2,0) (1,2,0) (1,2,3) (0,2,3)};
\addplot3[fill=yellow,opacity=0.5] coordinates {(0,2,0) (1,1,0) (1,0,1) (0,0,2) (0,2,0)};
\addplot3[fill=green,opacity=0.5] coordinates {(1,0,1) (1,0,3) (0,0,3) (0,0,2) (1,0,1)};
\end{axis}
\end{tikzpicture}
\begin{center}
	(3)
\end{center}
\end{minipage}
\caption{The polytopes of Example~\ref{ex:polytopes}.}\label{fig:polytopes}
\end{figure}
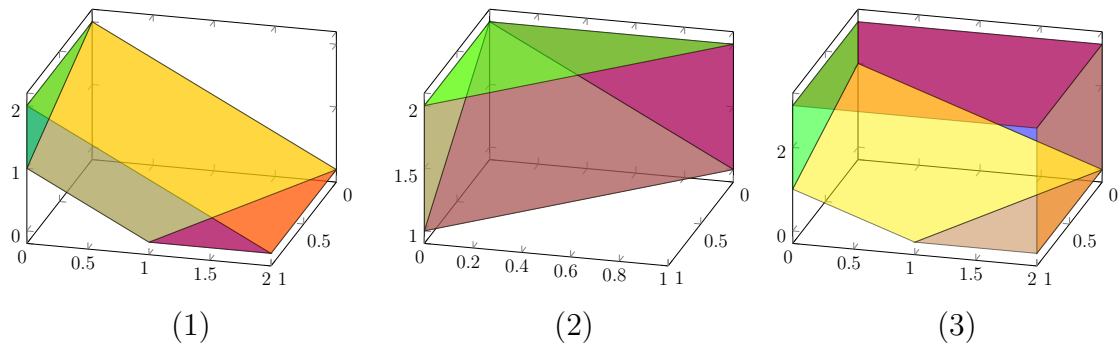

\ex\label{ex:polytopes}The following three polytopes are given in Figure~\ref{fig:polytopes}:\\
$(1)$ Let $k=2,\mathbf a=(1,2),\mathbf b=(1,2)$ and hence the inequalities defining $P$ in the nonnegative orthant are
\[
x_{1,1}\leq1, \quad x_{2,1}+x_{2,2}\leq2, \quad\text{ and } \quad x_{1,1}+x_{2,1}+x_{2,2}\geq2.
\]
$(2)$ Let $k=2,\mathbf a=(1,2),\mathbf b=(2,1)$, hence the inequalities defining $P$ in the nonnegative orthant are
\[
x_{1,1}+x_{1,2}\leq1, \quad x_{2,1}\leq2, \quad \text{ and } \quad x_{1,1}+x_{1,2}+x_{2,1}\geq2.
\]
Note that, as predicted by case (E3) in Proposition~\ref{pSVF}, $Z_{2,1}$ does not form a facet of $P$ and so $P$ has five facets. \\
$(3)$ Let $k=3,\mathbf a=(1,2,3),\mathbf b=(1,1,1)$, hence the inequalities defining $P$ in the nonnegative orthant are
\[
x_{1,1}\leq 1, \quad x_{2,1}\leq2,\quad x_{3,1}\leq 3,\quad \text{ and } \quad x_{1,1}+x_{2,1}+x_{3	,1}\geq2.
\]
\kex
The conclusion about the dimension in Proposition \ref{pSVF} was known and can be proved using the Teraccini Lemma. However, the detailed description of facets allows us to study the geometry of the secant variety in much greater detail, in particular the nature of singularities, which we do in the next sections.

\section{Singularities of the secant variety}\label{sec:singularity}

In this section we apply Proposition~\ref{pSVF} to study singularities of the secant variety of the Segre-Veronese variety $X$.  Note that for a variety $Y$, the secant variety $\sigma_2(Y)$ is smooth if and only if it fills the whole ambient space. The results when the secant of the Segre-Veronese variety $X$ fills the ambient space are quite standard; c.f. Theorem~\ref{GSV} below. Thus the first interesting question concerns classification of the cases when singularities are Gorenstein.

%\subsection{Gorenstein property of the secant variety}

By Proposition~\ref{prop:Pnormal}, $\sigma_2(X)$ can be covered by normal affine toric varieties isomorphic to a product of an affine space and $\widehat T_\Delta$. To study Gorenstein varieties we can employ \cite[Proposition~8.2.12]{cox2011toric}, which we include for reader's convenience adapted to $\widehat T_\Delta$.
\begin{obs}\label{prop:gorencond}
If the polytope $P$ of the toric variety $\widehat T_\Delta$ is full-dimensional then $\widehat T_\Delta$ is smooth if and only if the ray generators of $C_P$ form a basis of the lattice $\mathbb Z\times L_P$. Moreover, $\widehat T_\Delta$ is Gorenstein if and only if the rays $u$ of the dual cone $C_P^\vee$ belong to an affine subspace $\langle \beta,u\rangle=1$ for some integral point $\beta\in \mathbb Z\times L_P$.%\textcolor{red}{Please check}
\end{obs}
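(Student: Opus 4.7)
The plan is to recognize this proposition as the specialization to our setup of standard toric-geometry criteria for smoothness and for the Gorenstein property of a normal affine toric variety; the proof is then a matter of identifying the correct cone and lattice and invoking \cite[Proposition~8.2.12]{cox2011toric}.

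First I would verify that $\widehat T_\Delta$ is the affine toric variety $\Spec \C[C_P^\vee \cap M]$, where $M=\Z\times L_P$. By construction $\widehat T_\Delta$ is parameterized by the monomials $\mathbf t^{\mathbf c_\sigma}$ for $\sigma\in\Delta_{\geq 2}$, with an additional coordinate $x_0$ playing the role of homogenization; the corresponding lattice of characters is generated by the vectors $(1,\mathbf c_\sigma)$, which is exactly $M$, and the cone they span is $C_P$. Under the hypothesis that $P$ is full-dimensional, $C_P$ is a strongly convex rational polyhedral cone of maximal dimension $n+1$ in $M_{\R}$. Normality of $\widehat T_\Delta$, ensured by Lemma~\ref{lem:Pnormal}, is what allows us to pass freely between the geometric notions of Weil and Cartier divisors and their toric-combinatorial counterparts.

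With these identifications in hand, the two assertions of the proposition become direct quotes from the standard theory. The smoothness criterion says that an affine toric variety coming from a strongly convex cone of maximal dimension is smooth if and only if its primitive ray generators form a $\Z$-basis of the lattice, which gives the first half. The Gorenstein criterion of \cite[Proposition~8.2.12]{cox2011toric} states that such a variety is Gorenstein if and only if the primitive ray generators of the dual cone lie on a common affine hyperplane $\langle \beta,\cdot\rangle = 1$ for some $\beta\in M$, which gives the second half. The only delicate point, and the main potential obstacle beyond routine bookkeeping, is the insistence on working with the intrinsic lattice $\Z\times L_P$ rather than with the ambient $\Z^{n+1}$, since $L_P$ may be a proper sublattice of $\Z^n$; once this distinction is carefully observed, both halves of the proposition follow directly from the cited reference.
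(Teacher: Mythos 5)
Your proposal is correct and matches the paper exactly: the paper gives no proof of this proposition at all, presenting it as a restatement of \cite[Proposition~8.2.12]{cox2011toric} adapted to $\widehat T_\Delta$, which is precisely the identification (cone $C_P^\vee$, character lattice $\mathbb Z\times L_P$, normality from Lemma~\ref{lem:Pnormal}) that you carry out. Your remark about working in the intrinsic lattice $\mathbb Z\times L_P$ rather than $\mathbb Z^{n+1}$ is the one genuinely non-automatic point, and the paper is equally careful about it.
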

Geometrically, the condition $\langle\beta,u\rangle=1$ encodes a point $\beta$ in the lattice $\mathbb Z\times L_P$ that lies at distance one from all facets of $C_P$. If $P$ is not full-dimensional we can still look for such a point but taking care of the minimal lattice containing all lattice points of $P$. The condition for smoothness can also be easily extended to the case when $P$ is not full-dimensional. We illustrate this with two examples. 
\begin{exm}\label{ex:smooth}
	Suppose $k=2$, $\mathbf a=(1,1)$ and $\mathbf b=(1,2)$. The integral points in $P$ are  $(1,0,1)$, $(1,1,0)$. In this case $P=F=R_1=R_2$ and $\dim P=1$. The two ray generators of $C_P$ are $(1,1,0,1)$ and $(1,1,1,0)$. All the lattice points in $C_P\subset \R\times \R^3$ are of the form $(n_1+n_2,n_1+n_2,n_2,n_1)$ for some $n_1,n_2\in \N$ and so the ray generators are a basis of $\mathbb Z\times L_P$. This shows that the associated secant variety is smooth. 
\end{exm}

\begin{exm}\label{ex:goren}
	Suppose $k=2$, $\mathbf a=(1,1)$ and $\mathbf b=(2,2)$. Then $P$ is the product of 1-simplices with four integral points $(0,1,0,1)$, $(0,1,1,0)$, $(1,0,0,1)$, $(1,0,1,0)$. In this case again $P=F=R_1=R_2$ and $\dim P=2$. The sets $Z_{1,1}$, $Z_{1,2}$, $Z_{2,1}$, $Z_{2,2}$ all form facets of $P$. The point $(2,1,1,1,1)\in  C_P$ lies (in the lattice) at distance one from all the four facets of $C_P$, which confirms that the corresponding secant variety is Gorenstein. It is however \emph{not} smooth. 	\end{exm}

We are ready to present the complete classification of the Gorenstein property of the secant of the Segre-Veronese variety. We keep our convention of ordering elements of $\mathcal I$ lexicographically. 
\begin{thm}\label{GSV}
	The secant variety of the Segre-Veronese variety is smooth if and only if
	\begin{itemize}
		\item [(S1)] $\mathbf a=(1,1,1)$, $\mathbf b=(1,1,1)$,
		\item [(S2)] $\mathbf a=(1,1)$, $b_1=1$, or
		\item [(S3)] $a=1$ or ($a=2$ and $b=1$).
	\end{itemize}
	If the secant variety of the Segre-Veronese variety is not smooth, then it is Gorenstein if and only if one of the following holds
	\begin{itemize}
		\item [(G1)] $\mathbf a=(1,1,1,1,1)$, $\mathbf b=(1,1,1,1,1)$,
		\item [(G2)] $\mathbf a=(1,1,1)$, $\mathbf b$ equals one of $(1,1,3),(1,3,3)$ or $(3,3,3)$,
		\item [(G3)] $\mathbf a=(1,1,2)$, $\mathbf b$ equals $(1,1,1)$ or $(1,1,3)$,
		\item [(G4)] $\mathbf a=(2,2,2)$, $\mathbf b=(1,1,1)$,
		\item [(G5)] $\mathbf a=(1,1)$, $b_1=b_2, b_1>1$,
		\item [(G6)] $\mathbf a=(1,2)$, $\mathbf b$ equals one of $(1,1), (1,5), (2,1)$ or $(2,5)$,
%		\item [(G\star)] $\mathbf a=(2,2)$, $\mathbf b=(1,1)$
		\item [(G7)] $\mathbf a=(2,3)$, $\mathbf b$ equals $(1,1)$ or $(1,2)$,
		\item [(G8)] $a=2$, $b$ is even,
		\item [(G9)] $a=3$, $b$ equals $1$ or $5$,
		\item [(G10)] $a=4$, $b$ equals $1$ or $3$.
	\end{itemize}
\end{thm}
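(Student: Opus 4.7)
The plan is to convert both properties into combinatorial conditions on the polytope $P$ using the local toric description of $\sigma_2(X)$. By Proposition~\ref{prop:Pnormal} and Remark~\ref{rem:covered}, $\sigma_2(X)$ is covered by open charts isomorphic to $\C^n\times\widehat T_\Delta$; since smoothness and the Gorenstein property are local and unaffected by products with an affine space, it is enough to analyse $\widehat T_\Delta$. The combinatorial input is Lemma~\ref{lem:Pnormal} (explicit inequalities for $C_P$) together with Proposition~\ref{pSVF} (enumeration of facets of $P$).

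For smoothness, I would invoke the classical fact recalled at the start of Section~\ref{sec:singularity}: $\sigma_2(X)$ is smooth if and only if it fills its ambient projective space. From Theorem~\ref{main} one has $\dim\sigma_2(V_\Delta)=n+1+\dim P$, which Proposition~\ref{pSVF} controls explicitly. Comparing with the ambient dimension $\prod_i\binom{a_i+b_i}{a_i}-1$ and running through the cases, one sees that the only non-defective family for which the ambient is filled is (S1), while the defective cases (D1) and (D2) of Proposition~\ref{pSVF} together with the trivial $a=1$ case account for (S2) and (S3).

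For the Gorenstein classification, by Proposition~\ref{prop:gorencond} one needs an integral lattice point $\beta=(\beta_0,(\beta_{i,j}))$ satisfying $\langle\beta,u\rangle=1$ for every inward facet normal $u$ of $C_P$. Reading these normals off from Lemma~\ref{lem:Pnormal} produces the linear system
\[
\beta_{i,j}=1\ (\text{whenever }Z_{i,j}\text{ is a facet}),\quad a_i\beta_0-\textstyle\sum_j\beta_{i,j}=1,\quad \textstyle\sum_{i,j}\beta_{i,j}-2\beta_0=1.
\]
In the generic regime where every $Z_{i,j}$ is a facet, the first equation forces $\beta_{i,j}=1$, the $R_i$-equations then force $\beta_0=(b_i+1)/a_i$ to be independent of $i$, and the $F$-equation forces $\beta_0=(n-1)/2$. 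Setting $v=(n-1)/2$ and using $n=\sum b_i$, the problem reduces to the Diophantine equation $v(\sum_i a_i-2)=k+1$ with $b_i=a_iv-1\geq 1$. Enumerating its positive integer solutions produces the ``generic'' Gorenstein entries of the list --- notably (G1), (G4), and the largest-$\mathbf b$ row of (G2), (G3), (G6), (G7), (G9), (G10).

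The main obstacle is the systematic treatment of the non-generic situations dictated by Proposition~\ref{pSVF}. In the exceptional cases (E1)--(E4), one or more $Z_{i,j}$ fails to be a facet, releasing the corresponding constraint $\beta_{i,j}=1$; solving the reduced linear system under the remaining positivity and integrality constraints accounts for the small-parameter entries of (G2), (G3), (G6), (G9), (G10). The defective cases (D1), (D2) demand separate arguments: in (D1), $P$ is a product of two simplices, so $\widehat T_\Delta$ is the affine cone over the Segre $\P^{b_1-1}\times\P^{b_2-1}$, classically Gorenstein iff $b_1=b_2$, yielding (G5); in (D2), $P$ is a dilated simplex and $\widehat T_\Delta$ is the affine cone over $v_2(\P^{b-1})$, whose Gorenstein condition is the parity condition of (G8). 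A final verification is that the candidate $\beta$ in each case lies in the lattice $L_P$ generated by the lattice points of $\{1\}\times P$; a direct inspection of vertices shows $L_P=\Z^{n+1}$ whenever $\dim P=n$, so the lattice issue is only relevant in the defective cases already treated separately.
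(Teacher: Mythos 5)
Your treatment of the Gorenstein part is essentially the paper's own argument: you read the facet normals off Lemma~\ref{lem:Pnormal} and Proposition~\ref{pSVF}, impose $\langle\beta,u\rangle=1$ via Proposition~\ref{prop:gorencond}, reduce the generic case to $\beta_0(\sum_i a_i-2)=k+1$ with $b_i=a_i\beta_0-1$, and then relax the constraints $\beta_{i,j}=1$ in the exceptional cases (E1)--(E4) and treat the defective cases (D1), (D2) by the classical determinantal results. This is exactly the case division I.1--I.4 and II.1--II.2 of the paper. Two small remarks: the entry $\mathbf a=(2,3)$, $\mathbf b=(1,1)$ of (G7) also comes from an exceptional case (it is (E3), where $Z_{2,1}$ fails to be a facet) and is missing from your list of outputs of that analysis; and the claim $L_P=\Z^{n+1}$ in the full-dimensional case, which you only assert, is genuinely needed for the integrality test and should be verified (differences of lattice points of $P$ do generate $\Z^n$, and any one point then supplies the last generator).

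The genuine gap is in the smoothness classification. You replace the paper's lattice-basis criterion for the ray generators of $C_P$ by the dimension count ``smooth iff the secant fills the ambient space'' and then assert, without performing the comparison, that (S1) is the only non-defective case that fills. That assertion is false: for $\mathbf a=(1,2)$, $\mathbf b=(1,1)$ Proposition~\ref{pSVF} gives $\dim P=n=2$, so $\dim\sigma_2(X)=2n+1=5$, which equals the ambient dimension $2\cdot 3-1=5$; concretely, a general pencil of binary quadrics contains two perfect squares, so a general element of $V_1\otimes S^2V_2$ has rank two and $\sigma_2(\P^1\times v_2(\P^1))=\P^5$. The same conclusion follows torically, since the three ray generators $(1,0,2),(1,1,1),(1,1,2)$ of $C_P$ have determinant one. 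Carried out honestly, your route therefore does not reproduce the stated list: it puts $\mathbf a=(1,2)$, $\mathbf b=(1,1)$ among the smooth cases, whereas the theorem places it in (G6) and omits it from (S1)--(S3). You must either repair your enumeration of the filling cases or explicitly reconcile this case with the statement; as written, the ``only if'' direction of the smoothness claim is not established, and the one subcase where your method and the theorem disagree is precisely the one you skipped.
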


\dow
%Note that for a variety $Y$, the secant variety $\sec(Y)$ is smooth if and only if it fills the whole ambient space. Also, the results when the secant of the Segre-Veronese variety $X$ fills the ambient space are quite standard, thus the interesting classification concerns Gorenstein.

%We start with the most generic case $n\geq 3$\textcolor{red}{??}. 
For fixed data $\mathbf a,\mathbf b,k$ define the following vectors in $\mathbb Z\times \mathbb Z^n$ 
$$
u_F=-2e_0+\sum_{i=1}^k\sum_{j=1}^{b_i}e_{i,j}\qquad\mbox{and}\qquad u_{R_i}=a_ie_0-\sum_{j=1}^{b_i}e_{i,j}\qquad\mbox{and}\qquad u_{Z_{i,j}}=e_{i,j}.
$$
Let $\mathcal U=\{u_F,u_{R_1},\ldots,u_{R_{k}},u_{Z_{1,1}},\ldots,u_{Z_{k,b_k}}\}$. These vectors correspond to the faces $F, R_i, Z_{i,j}$. We use the facet description given in Proposition~\ref{pSVF}. By Proposition~\ref{prop:gorencond}, $\widehat T_\Delta$ is Gorenstein if and only if ray generators $u$ of $C_P^\vee$ belong  to an affine subspace $\langle \beta,u\rangle=1$ for some $\beta=(\beta_0, \beta_{i,j})\in \mathbb Z\times L_P$. 

We separately consider the cases when $P$ is full-dimensional and when it has positive codimension. In the full-dimensional case we separately consider cases depending on which vectors in $\mathcal U$ define facets of $P$. We start with the simplest, but most general case.

\medskip
Case I.1: $\dim P=n$ and all vectors in $\mathcal U$ define facets

If all the faces $F,R_i, Z_{i,j}$ are facets, then the rays of $C_P^\vee$ are precisely the vectors in $\mathcal U$. By evaluating on vectors $u_{Z_{i,j}}$, we get that $\beta_{i,j}=1$ for all $(i,j)\in \mathcal I$. By evaluating on the remaining elements of $\mathcal U$, we obtain that if $\beta$ as above exists, it must in addition satisfy
\begin{equation}\label{eq:condbeta}
-2\beta_0+\sum_{i=1}^k b_i=1 \qquad\mbox{and}\qquad a_i\beta_0-b_i=1 \quad\mbox{for all } 1\leq i\leq k.
\end{equation}
%\textcolor{red}{If $k=1$ then, by Proposition~\ref{pSVF}, necessarily $a\geq 3$ and $b\geq 2$. In this case (\ref{eq:condbeta}) gives that $\beta_0=\frac{2}{a-2}$ and $b=\frac{a+2}{a-2}$. If $a=3$, $b=5$ then $\beta_0=2$ and so $\beta$ is an integer vector proving part of (G9)}
%

We can solve the second set of equations in (\ref{eq:condbeta}) for $b_i$ and plug into the first equation to get $\beta_0\left((\sum a_i)-2\right)=k+1$. If $k=1,2$ then, by Proposition~\ref{pSVF}, $\sum a_i\geq 3$ and for all $k$ it trivially holds that $\sum a_i\geq k$. Thus, it is clear that (\ref{eq:condbeta}) has \emph{no} solution if $\beta_0>4$. If $\beta_0=4$ then (\ref{eq:condbeta}) becomes $\sum_i b_i=9$ and $4a_i=b_i+1$ for $1\leq i\leq k$. This means that $b_i\in \{3,7,11,\ldots\}$ and $b_i$'s sum to $9$. The only way this is possible, is when $k=3$, $\mathbf b=(3,3,3)$, and $\mathbf a=(1,1,1)$ providing a part of $(G2)$. If $\beta_0=3$ then necessarily $b_i\in \{2,5,8,11,\ldots\}$ and $b_i$'s sum to $7$. This is only possible if $\mathbf a=(1,2)$, $\mathbf b=(2,5)$, providing part of (G6). If $\beta_0=2$, (\ref{eq:condbeta}) implies that $b_i\in \{1,3,5,\ldots\}$ and $\sum_i b_i=5$. There are three possible solutions: (i) $k=1$, $a=3$, $b=5$ (part of (G9)), (ii) $k=3$, $\mathbf a=(1,1,2)$, $\mathbf b=(1,1,3)$ (part of (G3)), (iii) $k=5$, $\mathbf a=(1,1,1,1,1)$, $\mathbf b=(1,1,1,1,1)$ (gives (G1)). If $\beta_0=1$, then $\sum_i b_i=3$. There are three possible solutions: (i) $k=1$, $a=4$, $b=3$ (part of (G10)), (ii) $k=2$, $\mathbf a=(2,3)$, $\mathbf b=(1,2)$ (part of (G7)), (iii) $k=3$, $\mathbf a=(2,2,2)$, $\mathbf b=(1,1,1)$ (gives (G4)).

We now discuss the exceptional cases of Proposition~\ref{pSVF} when some $Z_{i,j}$ do not form facets (necessarily $k\leq 3$) but $P$ is still full-dimensional. 

\medskip
Case I.2: $\dim P=n$ and none of $Z_{i,j}$ form a facet

In this case the rays of $C_P^\vee$ are $u_F$ and $u_{R_i}$ for $1\leq i\leq k$. If $\beta$ exists such that $\langle \beta,u\rangle=1$ for all rays $u$, it must satisfy
\begin{equation}\label{eq:condbeta2}
-2\beta_0+\sum_{i=1}^k\sum_{j=1}^{b_i} \beta_{i,j}=1 \qquad\mbox{and}\qquad a_i\beta_0-\sum_{j=1}^{b_i}\beta_{i,j}=1 \quad\mbox{for all } 1\leq i\leq k.
\end{equation}
In particular, $(\sum_i a_i-2)\beta_0=k+1$. If $k=1$, by Proposition~\ref{pSVF}, we have  $b=1$ and $a\geq 3$. By the above equations, we have two options to get an integer solution: (i) $a=3$ (then $\beta_0=2$, $\beta_{1,1}=5$) giving part of (G9), and (ii) $a=4$ (then $\beta_0=1$, $\beta_{1,1}=3$) which gives part of (G10). We note in passing that if $a\geq 5$ then $\beta_0=2/(a-2)$, $\beta_{1,1}=(a+2)/(a-2)$ is always a valid rational solution, which confirms (Q4) in Theorem~\ref{QGSV}.

If  $k=2$, by Proposition~\ref{pSVF}, we have $b_1=b_2=1$ and either $\mathbf a=(1,2)$ or $\mathbf a=(2,2)$.  In the first case, by (\ref{eq:condbeta2}), we get $\beta_0=3$, $\beta_{1,1}=2$, $\beta_{2,1}=5$, which gives part of (G6). In the second case, $\beta_0=3/2$, $\beta_{1,1}=\beta_{2,1}=2$ and so there is no integer solution. We note in passing that this gives part of (Q1) in Theorem~\ref{QGSV}. If $k=3$, by Proposition~\ref{pSVF}, we have have $\mathbf a=(1,1,1)$ and $\mathbf b=(1,1,1)$, in which case the rays of the dual cone $C_P^\vee$ form a basis of the lattice  $\Z\times L_P$  and so we obtain the smooth case $(S1)$. We cannot have $k\geq 4$ by Proposition~\ref{pSVF}.

\medskip
Case I.3: $\dim P=n$ and exactly two $Z_{i,j}$ fail to form a facet

If $k=1$ this is not possible by Proposition~\ref{pSVF}. If $k=2$, again by Proposition~\ref{pSVF}, this is only possible when $b_1=b_2=1$ and so when \emph{none} of $Z_{i,j}$ are facets. This was already covered in Case I.2. Finally, if $k=3$, we necessarily have that $\mathbf a=(1,1,1)$ and $\mathbf b$ has exactly two entries equal to one. Up to symmetry $\mathbf b=(1,1,b_3)$. Since $Z_{3,j}$ for $1\leq j\leq b_3$ all form facets, equation $\langle \beta,u\rangle=1 $ for all rays $u$ of $C_P^\vee$ implies that $\beta_{3,j}=1$ for all $1\leq j\leq b_3$. Now evaluating this form on $u_{R_3}=e_0-\sum_{j}e_{3,j}$ we get that $\beta_0-b_3=1$. Using $u_F$, $u_{R_1}$, and $u_{R_2}$ we conclude that $\beta_0=4$ and $\beta_{1,1}=\beta_{2,1}=b_3=3$. This gives a part of $(G2)$. 

\medskip
Case I.4: $\dim P=n$ and exactly one $Z_{i,j}$ fails to form a facet

If $k=1$ this is only possible when $b=1$, which was covered in Case I.2. If $k=2$, by Proposition~\ref{pSVF}, we have three possible cases
\begin{enumerate}
	\item [(i)] $b_1=1$, $b_2\geq 2$, $a_1\leq 2$, $a_2=2$, when only $Z_{1,1}$ fails to form a facet,
	\item [(ii)] $b_2=1$, $a_1\leq 2$, $a_2\geq 3$, when only $Z_{2,1}$ fails to form a facet,
	\item [(iii)] $b_1\geq 2$, $b_2=1$, $a_1\leq 2$, $a_2= 2$, when only $Z_{2,1}$ fails to form a facet.
\end{enumerate}
In case (i), evaluating $\langle\beta,u\rangle =1$ on the rays $u$ of $C_P^\vee$, we get that $\beta_{2,j}=1$ for all $1\leq j \leq b_2$ and
$$
-2\beta_0+\beta_{1,1}+b_2=1,\;\;a_1\beta_0-\beta_{1,1}=1,\;\;2\beta_0-b_2=1.
$$ 
In particular, $\beta_{1,1}=2$, which reduces this system to two equations: $2\beta_0-b_2=1$, $a_1\beta_0=3$. If $a_1=1$ then $\beta_0=3$ and so $\mathbf a=(1,2)$, $\mathbf b=(1,5)$, which confirms part of (G6). If $a_1=2$ then $\beta_0=3/2$ and so the solution is not integer. We note in passing that the solution is rational and in that case $\mathbf a=(2,2)$ and $\mathbf b=(1,2)$, which confirms part of (Q1) in Theorem~\ref{QGSV}.

In case (ii), we get $\beta_{1,j}=1$ for all $1\leq j\leq b_1$ and  
$$
-2\beta_0+b_1+\beta_{2,1}=1,\;\;a_1\beta_0-b_1=1,\;\;a_2\beta_0-\beta_{2,1}=1.
$$ 
This in particular implies that $(a_1+a_2-2)\beta_0=3$, which has integer solutions only if either $a_1+a_2=5$, $\beta_0=1$, or if $a_1+a_2=3$, $\beta_0=3$. Since $a_2\geq 3$ the only possibilities are $\mathbf a=(1,4)$ and $\mathbf a=(2,3)$. The former leads to no valid solution and if $\mathbf a=(2,3)$ we get $\mathbf b=(1,1)$, $\beta_{2,1}=2$, which confirms part of (G7). For the proof of Theorem~\ref{QGSV} we also want to consider when the above system admits rational solutions. In case (ii) we either have $a_1=1$ or $a_1=2$. If $a_1=1$ then $\beta_0=3/(a_1+a_2-2)\leq 3/2$ and equation $a_1\beta_0-b_1=1$ implies that $b_1\leq 1/2$, which is impossible. If $a_1=2$ then by a similar argument necessarily $b_1=1$, $a_2=3$, which was already covered above. We conclude there are no extra rational solutions possible. 

In case (iii), we get $\beta_{1,j}=1$ for all $1\leq j\leq b_1$ and  
$$
-2\beta_0+b_1+\beta_{2,1}=1,\;\;a_1\beta_0-b_1=1,\;\;2\beta_0-\beta_{2,1}=1.
$$ 
This in particular implies that $a_1\beta_0=3$, which has integer solutions only if either $a_1=3$ (impossible), or if $a_1=1$, $\beta_0=3$. This gives that $\mathbf a=(1,2)$, $\mathbf b=(2,1)$, $\beta_0=3$, $\beta_{2,1}=5$ giving part of (G6). If $a_1=2$ we get an additional rational solution with $\beta_0=3/2$. In this case $\mathbf a=(2,2)$, $\mathbf b=(2,1)$, which confirms part of (Q1) in Theorem~\ref{QGSV}. Note that the varieties for $\mathbf a=(2,2)$, $\mathbf b=(2,1)$ and $\mathbf a=(2,2)$, $\mathbf b=(1,2)$ are isomorphic.

 If $k=3$, we necessarily have $\mathbf a=(1,1,a_3)$ and $\mathbf b=(b_1,b_2,1)$. Evaluating $\langle \beta,u\rangle=1$ on all $u\in \mathcal U\setminus \{u_{Z_{3,1}}\}$ we get $\beta_{1,i}=\beta_{2,j}=1$ for all $i,j$ and 
$$
-2\beta_0+b_1+b_2+\beta_{3,1}=1,\;\;\beta_0-b_1=1,\;\;\beta_0-b_2=1,\;\;a_3\beta_0-\beta_{3,1}=1.
$$ 
This implies that $a_3\beta_0=4$, which has three integral solutions. If $a_3=1$, $\beta_0=4$ we obtain $\mathbf a=(1,1,1)$, $\mathbf b=(3,3,1)$, which gives part of (G2). If $a_3=2$, $\beta_0=2$ we obtain $\mathbf a=(1,1,2), \mathbf b=(1,1,1)$, part of (G3). If $a_3=4$, $\beta_0=1$ there is no positive solution. Since $b_i=\beta_0-1$, no further rational solutions are possible. 

Finally, Proposition~\ref{pSVF} confirms that a situation when more than two but not all $Z_{i,j}$ fail to be facets is impossible. This concludes the proof of Case I. 

\medskip
Case II: $\dim P<n$

We have two main cases to consider. 

\medskip
Case II.1: $k=2$, $\mathbf a=(1,1)$ (this case was classically known, c.f.~Remark \ref{GSVr})

The only integer points contained in $P$ are of the form $e_{1,j}+e_{2,j'}$ for $1\leq j\leq b_1$, $1\leq j'\leq b_2$; see also Example~\ref{ex:smooth} and Example~\ref{ex:goren}. Thus $P=F=R_1=R_2$ and $\dim P=n-2$ ($P$ is contained in the affine space $\sum_j x_{1,j}=\sum_{j}x_{2,j}=1$). 

Assume first that $b_2\geq b_1\geq 2$ then $e_{1,2}+e_{2,j}$ for $1\leq j\leq b_2$ and $e_{1,j}+e_{2,1}$ for $3\leq j\leq b_1$ (note that there is no such point if $b_1=2$) form a set of $n-2$ affinely independent points in $Z_{1,1}$. In this case $Z_{1,1}$ forms a facet of $P$ and the same argument applies to all other $Z_{i,j}$. If $b_1=b_2$ then the point $\beta=b_1e_0+\sum_{j}e_{1,j}+\sum_{j}e_{2,j}$ lies in $C_P\cap (\mathbb Z\times L_P)$ and is in the lattice at distance one from the facets of $C_P$. This gives (G5). If $b_1<b_2$ there is no such point. 

Suppose now that $b_1=1$. In this case $Z_{1,1}=\emptyset$ and so it does not form a facet of $P$ (unless $b_2=1$ in which case $P$ is the point $(1,1)$). Thus, if $b_1=b_2=1$, the corresponding variety is smooth which gives part of (S2). However, if $b_2\geq 2$ then $Z_{1,1}$ is not a facet but each $Z_{2,j}$ is a facet since it contains $n-2$ affinely independent points $e_{1,1}+e_{2,j'}$ for $j'\neq j$. Since these points $e_{1,1}+e_{2,j'}$, for all $1\leq j'\leq b_2$, of the polytope $P$ are linearly independent and form a basis of $L_P$, we conclude that the corresponding variety is smooth giving part of (S2).

\medskip
Case II.2: $k=1$, $a\leq 2$

If $a=1$ then $P$ is empty giving part of (S3).

If $a=2$ then $P=F=R_1$ and $\dim P=n-1$ as the polytope is contained in the affine space $\sum_j x_{1,j}=2$. If $b\geq 2$ then $Z_{1,j}$ all form facets of $P$, which we confirm for $Z_{1,1}$  by taking points $2e_{1,j}$ for $2\leq j\leq b_1$. The point $\beta=(b/2,1,\ldots,1)$ lies at distance 1 to the facets and so the corresponding variety is Gorenstein if $b$ is even, which gives (G8). If $b=1$ then $P$ is a point, which gives a part of (S3). If $b>1$ is odd then  the corresponding variety is $\Q$-Gorenstein, but not Gorenstein, which confirms (Q3) in Theorem~\ref{QGSV}.
\kdow

\uwa\label{GSVr}
For the case $k=2$, the secant of the Segre product is the locus of $(b_1+1)\times (b_2+1)$ matrices of rank at most two. Hence it is  also a determinental variety. In this setting, (G5) of the above classification is classically known as the case of square matrices. It was, after partial attempts by Eagon \cite{eagon1969examples} and Goto  \cite{goto1974determinantal}, proved by Svanes (see \cite[Theorem~5.5.6]{svanes1974coherent}); see \cite[Chapter 8]{bruns2006determinantal} for more discussion. Also, $(G1), (G2)$ and $(G5)$ were proved in \cite{MOZ}. Moreover, $(G8)$ was also considered by Hibi and Ohsugi in \cite{Hibi}, who gave the description of the polytope. Hibi and Ohsugi also used their description of the polytope, in case of a graph having no loops and no multiple edges, in \cite{ohsugi2006special} to investigate Gorenstein property.\kuwa

We now present the complete classification of $\Q-$Gorenstein property of the secant of the Segre-Veronese variety. We again analyse this using local toricness of $\sigma_2(X)$. Since $\widehat T_\Delta$ is a normal toric variety, from \cite[Proposition~8.2.12]{cox2011toric}, it is $\Q-$Gorenstein if and only if the ray generators of the dual cone $C_P^\vee$ belong to an affine subspace $\langle\beta,u\rangle=1$ for some rational point $\beta\in (\Z\times L_P)\otimes \Q$. 
\begin{thm}\label{QGSV}
		The secant variety of the Segre-Veronese variety is $\Q-$Gorenstein but not Gorenstein if and only if
	\begin{itemize}
		\item [(Q1)] $\mathbf a=(2,2)$, $\mathbf b$ equals one of $(1,1), (1,2)$ or $(2,2)$,
		\item [(Q2)] $\mathbf a=(4,4), \mathbf b=(1,1)$,
		\item [(Q3)] $a=2, b>1$ is odd,
		\item [(Q4)] $a\geq5, b=1$, or
		\item [(Q5)] $a=6, b=2$.
	\end{itemize}
\end{thm}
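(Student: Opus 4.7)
The plan is to mirror the proof of Theorem \ref{GSV}, exploiting the local toric description of $\sigma_2(X)$ via Proposition \ref{prop:Pnormal}. By \cite[Proposition 8.2.12]{cox2011toric}, $\widehat T_\Delta$ is $\mathbb Q$-Gorenstein if and only if the ray generators $u$ of the dual cone $C_P^\vee$ lie on an affine hyperplane $\langle\beta,u\rangle=1$ for some \emph{rational} $\beta\in(\mathbb Z\times L_P)\otimes\mathbb Q$. Thus the classification reduces to re-examining, case by case from the proof of Theorem \ref{GSV}, the linear systems associated to the facet description of Proposition \ref{pSVF}, but now allowing rational solutions and discarding those that were already integral (hence Gorenstein, already listed in Theorem \ref{GSV}).

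Concretely I would proceed as follows. First, in the full-dimensional Case I.1 where all $u_F, u_{R_i}, u_{Z_{i,j}}$ give facets, the argument forces $\beta_{i,j}=1$, $b_i=a_i\beta_0-1$, and $(\sum_i a_i-2)\beta_0=k+1$; hence a rational but non-integral $\beta_0$ arises precisely when $\sum a_i-2$ does not divide $k+1$ while the resulting $a_i\beta_0-1$ are still positive integers. A systematic scan yields exactly $\mathbf a=(4,4)$, $\mathbf b=(1,1)$ (giving $\beta_0=1/2$) and $a=6$, $b=2$ (giving $\beta_0=1/2$), which are (Q2) and (Q5); for $k=1$ the relation $\beta_0=2/(a-2)$, $b=(a+2)/(a-2)$ always has a rational solution for $a\geq 3$, but the extra cases $a\geq 5$, $b=1$ require $b=(a+2)/(a-2)\in\mathbb Q$ together with the constraint $b=1$, which forces $a\geq 5$, giving (Q4) (with the exception of $a=6$, $b=2$ already in (Q5)). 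Next, for the exceptional Cases I.2--I.4, where some $Z_{i,j}$ fail to be facets, I recover the rational solutions already flagged in passing in the proof of Theorem \ref{GSV}: namely $\mathbf a=(2,2)$ with $\mathbf b=(1,1), (1,2), (2,2)$, and I would verify these give the remaining part of (Q1) by checking that $\beta_0=3/2$ is the unique solution and that $b_i = a_i\beta_0-1$ (when applicable) indeed takes the prescribed values.

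For the non-full-dimensional Case II, the lattice $L_P$ is smaller and one must work inside $(\mathbb Z\times L_P)\otimes \mathbb Q$. For $k=2$, $\mathbf a=(1,1)$ the polytope $P$ coincides with $F=R_1=R_2$ and the lattice condition forces $b_1=b_2$ already for rational $\beta$, so the only $\mathbb Q$-Gorenstein cases here are exactly the integral (G5) and the smooth (S2) cases, contributing nothing new. For $k=1$, $a=2$, $b\geq 2$, the polytope is an $(n-1)$-dimensional simplex (after intersection with $\sum_j x_{1,j}=2$) and a direct computation with the candidate $\beta=(b/2,1,\dots,1)$ shows it always lies at rational distance one from all facets of $C_P$, giving (Q3) whenever $b>1$ is odd (the even case being Gorenstein, (G8)).

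The principal obstacle is the bookkeeping: unlike the Gorenstein classification, where integrality immediately terminates most branches, the rational analysis produces many \emph{a priori} candidate solutions and one must carefully rule out the cases where either the positivity of $b_i$ or the membership of $\beta$ in the correct lattice fails. In particular the rational solution must lie in $\mathbb Z\times L_P$ tensored with $\mathbb Q$, which in positive codimension imposes extra divisibility constraints that must be checked (this is what makes Case II restrictive). Once each case is handled I would collect the surviving rational-only solutions and verify that they coincide exactly with (Q1)--(Q5), completing the classification.
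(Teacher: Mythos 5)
Your proposal follows essentially the same route as the paper: reuse the case decomposition from the proof of Theorem~\ref{GSV}, replace the integrality requirement on $\beta$ by rationality via \cite[Proposition~8.2.12]{cox2011toric}, and collect the rational-but-not-integral solutions. The paper's own proof is in fact terser than your plan: it only redoes Case I.1 explicitly (scanning the half-integer values $\beta_0=m/2$) and observes that every other case was already settled ``in passing'' inside the proof of Theorem~\ref{GSV}.

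Two small corrections to your bookkeeping. First, your claim that the Case I.1 scan yields \emph{exactly} (Q2) and (Q5) is wrong: at $\beta_0=3/2$ one gets $\sum_i b_i=4$ with $b_i\in\{2,5,8,\ldots\}$, which forces $k=2$, $\mathbf a=\mathbf b=(2,2)$ --- this is the part of (Q1) with $\mathbf b=(2,2)$, and it lives in Case I.1 (all $Z_{i,j}$ are facets since $b_1=b_2=2$), not in the exceptional Cases I.2--I.4 where you place it; the solutions flagged in passing in the proof of Theorem~\ref{GSV} only cover $\mathbf a=(2,2)$ with $\mathbf b=(1,1)$ and $(1,2)$. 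Followed literally, your plan would therefore miss $\mathbf a=\mathbf b=(2,2)$. Second, for $k=1$, $b=1$, $a\geq 5$ the relevant quantity is $\beta_{1,1}=(a+2)/(a-2)$ (a coordinate of $\beta$, unconstrained to be integral because $Z_{1,1}$ is not a facet), not a formula for $b$; writing ``$b=(a+2)/(a-2)$'' conflates the two. Neither slip affects the final list, but both would need to be fixed for the argument to close.
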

\dow
We follow the same structure as in the proof of Proposition~\ref{GSV}. 

\medskip
Case I.1: $\dim P=n$ and all vectors in $\mathcal U$ define facets

Line in the proof of Proposition~\ref{GSV}, equations $\langle\beta,u\rangle=1$ for $u\in \mathcal U$ imply that $\beta_{i,j}=1$ for all $(i,j)\in \mathcal I$, and further that (\ref{eq:condbeta}) holds. If there is a (rational) solution then $\beta_0=(n-1)/2$ and $\beta_0$ satisfies $\beta_0\leq 4$, it follows that $n\leq 9$ and $\beta_0=\frac{m}{2}$ for $m\in \{1,2,\ldots,8\}$. The cases when $\beta_0\in \{1,2,3,4\}$ were covered in the proof of Proposition~\ref{GSV}. If $\beta_0=7/2$ we get $\sum b_i=8$ and $b_i\in \{6,13,20,\ldots\}$ and so there is no solution.  If $\beta_0=5/2$ then $\sum b_i=6$ and $b_i\in \{4,9,14,\ldots\}$ and again there is no solution. If $\beta_0=3/2$ then $\sum b_i=4$ and $b_i\in \{2,5,8,\ldots\}$ and so $k=2$, $\mathbf b=(2,2)$, $\mathbf a=(2,2)$ is a solution, which gives part of (Q1). If $\beta_0=1/2$ then $\sum b_i=2$. There are two potential solutions. One is $k=2$, $\mathbf a=(4,4)$, $\mathbf b=(1,1)$, which gives (Q2). The other is  $k=1$, $a=6$, $b=2$, which gives (Q5).   

All the remaining cases were covered in the proof of Proposition~\ref{GSV}.
%For $k\geq 3$, we only get smooth or Gorenstein classification\textcolor{red}{How can this be true???}. Therefore, we discuss the case $k=2, a_2>1$ and leave the rest for interested readers.
%When all faces $F, R_i, Z^i_j$ are facets, then the secant variety $\Sec(X)$ is $\Q-$Gorenstein, but not Gorenstein, for $\mathbf a=(2,2), \mathbf b=(2,2)$ or $\mathbf a=(4,4), \mathbf b=(1,1)$. When $Z^{i_0}_1$ is not a facet (but $Z^{i_1}_j$ are facets for $i_1\not=i_0$), then we get $\mathbf a=(2,2), \mathbf b=(1,2)$. When both $Z^1_1, Z^2_1$ are not facets, then we get $\mathbf a=(2,2), \mathbf b=(1,1)$.
\kdow
Our methods enabled a complete classification of cases when the secant variety is Gorenstein or $\Q-$Gorenstein, i.e.~every local ring of every point in the projective variety is Gorenstein. There is a different, stronger property, sometimes referred to arithmetically Gorenstein, which asks when the ring is Gorenstein after localizing at the zero point of the affine cone. The problem of classification of arithmetically Gorenstein secant varieties remains open even for Segre products. In principle, one should be able to check each particular example from the list we provide on a computer, however this is not doable in practice. 
\begin{question}
Which secant varieties of Segre-Veronese varieties are arithmetically Gorenstein?
\end{question}

\section{Singular locus of the secant variety}\label{sec:locus}

We are now ready to describe the singular locus of the secant of the Segre-Veronese variety, thus extending the results of \cite{MOZ} for the case of the secant of Segre variety. Our description of the singular locus relies on careful analysis of the vertices of the polytope $P$. This gives us understanding of the associated projective variety $V_P$ (c.f.  \cite[Section 2.3]{cox2011toric} for the details of this construction), which is equal to the projectivization of $T_\Delta$. This can be then directly translated to the singular locus of $\widehat T_\Delta$. This is because a point $p$ of a projective variety $V_P$ is singular if and only if some/any nonzero lift $p'$ of that point is a singular point of the affine cone $\widehat V_P$ over $V_P$. In other words: projectivization of the singular locus is the singular locus of the projectivization.

It is a well known fact that all properties of the polytope $P$ and the associated (projective) variety $V_P$ are encoded in the normal fan $\Sigma$ of $P$, since $P$ is normal by Lemma~\ref{lem:Pnormal}. The maximal cones of $\Sigma$ are constructed as follows (and all the subcones appear as intersection of maximal cones). For each vertex $v$ of $P$, consider the normal vectors to all facets containing $v$ pointing inside the polytope $P$. The cone corresponding to $v$ generated by such normal vectors is denoted by $\sigma_v$. A rational polyhedral cone $\sigma$ is \emph{smooth} if its ray generators form a part of a basis of the lattice. A vertex $v$ is a smooth vertex if and only if $\sigma_v$ is a smooth cone.

%
%
%Recall that a vertex $v$ of an $l-$dimensional polytope $P\subset \R^n$ is \emph{smooth} if (i) there are exactly $l$ edges adjacent to $v$ with lattice points nearest to $v$ denoted by $v_1,\ldots,v_l$, and (ii) the vectors $v_1-v,\ldots,v_l-v$ form part of a basis of the lattice generated by $P$. Equivalently, $v$ is smooth if the number of facets of $P$ that contain $v$ is equal to $\dim P$. 

%\textcolor{red}{Briefly describe what is happening. Are we studying when $\widehat T_\Delta$ is smooth? Why is $v$ being smooth relevant? What does it tell us? Can we give a reference? }

%\begin{rem}\label{rSLSV}
We now classify singular vertices of $P$. Note that if $\Delta_{b_i}=\conv(0,e_{i,1},\ldots,e_{i,b_i})\subset \r^{b_i}$ is the standard $b_i-$simplex, then the polytope associated to the Segre-Veronese variety is $Q:=a_1\Delta_{b_1}\times\cdots\times a_k\Delta_{b_k}$. Hence $v=(v_1,\ldots,v_k)$ is a vertex of $Q$ if and only if  each $v_i$ is a vertex of $a_i\Delta_{b_i}$. Denoting
	\[
	F^+: \sum_{i,j} x_{i,j}\geq 2
	\]
	we can write the polytope $P$ of the secant variety $\sigma_2(X)$ as $P=Q\cap F^+$, see Lemma~\ref{lem:Pnormal}.
	Therefore, a vertex $v$ of $P$ either belongs to $F$ or not, that is, the sum of the coordinates is either equal to two or strictly greater than two. If the sum is strictly greater than two, then a vertex $v$ of $P$ is also a vertex of $Q$, since locally $P$ and $Q$ are the same outside of $F$. In that case $v$ is smooth as all vertices of $Q$ are smooth. Hence, the possible non-smooth vertices of $P$ belong to the hyperplane $F$ and hence are of the  form $e_{i,j}+e_{i',j'}$ or $2e_{i,j}$ (if $a_i\geq 2$). The following Lemma~identifies the vertices of $P$ which lie on the hyperplane $F$. 
\begin{lema}\label{vSLSV} The point $e_{i,j}+e_{i',j'}$ forms a vertex of $P$ if and only if $i\neq i'$ and $\min\{a_{i},a_{i'}\}=1$. The point $2e_{i,j}$ forms a vertex of $P$ if and only if $a_{i}\geq 2$.
%For all $i\neq i'$ if either $a_i=1$ or $a_{i'}=1$ then the points $e_{i,j}+e_{i',j'}$ for all $1\leq j\leq b_i$, $1\leq j'\leq b_{i'}$ form vertices of $P$. For all $i,i'$ if $a_i,a_{i'}\geq 2$ then $e_{i,j}+e_{i',j'}$ is \emph{not} a vertex of $P$ but $2e_{i,j}$ and $2e_{i',j'}$ are. 
\end{lema}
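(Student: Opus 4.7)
The plan is to use the facet description from Lemma~\ref{lem:Pnormal} and characterize vertices of $P$ by the standard criterion: a point $v\in P$ is a vertex if and only if the set of inequalities from Lemma~\ref{lem:Pnormal} that are active at $v$ has rank $n$. Since all candidate points $e_{i,j}+e_{i',j'}$ and $2e_{i,j}$ satisfy $\sum_{i'',j''}x_{i'',j''}=2$, they all lie on $F$; moreover, by the discussion preceding the lemma, these are the only candidates we need to check.

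To handle the negative direction I would use convex combinations. First, the case $i=i'$, $j\neq j'$: if $a_i=1$ then $e_{i,j}+e_{i,j'}$ violates the inequality $\sum_j x_{i,j}\le a_i$, so it is not even in $P$; if $a_i\ge 2$, both $2e_{i,j}$ and $2e_{i,j'}$ lie in $P$ and their midpoint is $e_{i,j}+e_{i,j'}$, so it is not a vertex. Next, for $v=e_{i,j}+e_{i',j'}$ with $i\neq i'$ and $\min\{a_i,a_{i'}\}\ge 2$, observe that $2e_{i,j}$ and $2e_{i',j'}$ both lie in $P$ and their midpoint equals $v$, so again $v$ is not a vertex.

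For the positive direction, I would count the active constraints. Suppose $i\neq i'$ and, without loss of generality, $a_i=1$. At $v=e_{i,j}+e_{i',j'}$ the $n-2$ inequalities $x_{i'',j''}\ge 0$ with $(i'',j'')\notin\{(i,j),(i',j')\}$ are active, as well as $F$ and $R_i$ (the latter because $a_i=1$ and $v_{i,j}=1$). Restricted to the two-dimensional subspace spanned by $e_{i,j}$ and $e_{i',j'}$ (the complement of the hyperplanes cut out by the active $Z$ constraints), the equations $F$ and $R_i$ read $x_{i,j}+x_{i',j'}=2$ and $x_{i,j}=1$, which are linearly independent. Hence the total number of linearly independent active constraints is $n$, so $v$ is a vertex. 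Similarly, at $2e_{i,j}$ with $a_i\ge 2$ the $n-1$ constraints $x_{i'',j''}\ge 0$ for $(i'',j'')\neq (i,j)$ together with $F$ form an independent system of rank $n$, showing that $2e_{i,j}$ is a vertex.

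The argument is essentially bookkeeping of active facets, and the only mildly delicate point is separating out the case $i=i'$ where the point $e_{i,j}+e_{i,j'}$ is either excluded from $P$ or expressible as a midpoint. The counting of rank and the exhibition of convex decompositions are both straightforward once Lemma~\ref{lem:Pnormal} is in hand.
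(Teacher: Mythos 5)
Your proposal is correct. The negative direction is exactly the paper's argument: when $\min\{a_i,a_{i'}\}\geq 2$ the point $e_{i,j}+e_{i',j'}$ is the midpoint of $2e_{i,j}$ and $2e_{i',j'}$, both of which lie in $P$. For the positive direction you take a slightly different route: you invoke the inequality description of $C_P$ from Lemma~\ref{lem:Pnormal} and verify that the active constraints at the candidate point ($n-2$ coordinate hyperplanes together with $F$ and $R_i$, respectively $n-1$ coordinate hyperplanes together with $F$) have rank $n$. The paper instead restricts to the coordinate face where all other coordinates vanish, lists the finitely many lattice points of $P$ on that face, and checks that the candidate is not in the convex hull of the remaining ones; since a vertex of a face is a vertex of $P$, this suffices. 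The two mechanisms are equally valid; yours leans more heavily on the completeness of the facet description in Lemma~\ref{lem:Pnormal}, while the paper's only needs the classification of lattice points supported on one or two coordinates, so it is marginally more self-contained. Your rank computations check out (modulo the active coordinate functionals, $F$ and $R_i$ restrict to $x_{i,j}+x_{i',j'}$ and $x_{i,j}$, which are independent). One triviality you leave implicit: for the ``only if'' part of the second claim you should note that when $a_i=1$ the point $2e_{i,j}$ violates $\sum_j x_{i,j}\leq a_i$ and hence is not even in $P$ --- the same one-line observation you already make for $e_{i,j}+e_{i,j'}$ in the case $i=i'$.
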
	
\begin{proof}
If $\min\{a_{i},a_{i'}\}>1$ then $e_{i,j}+e_{i',j'}=\frac{1}{2}(2e_{i,j})+\frac{1}{2}(2e_{i',j'})$, hence it is not a vertex. If $a_i=1$ then we must have $i\neq i'$. The only lattice points in $P$ with coordinates different from $e_{i,j},e_{i',j'}$ equal to zero are $e_{i,j}+se_{i',j'}$ for $1\leq s\leq a_{i'}$. The point $e_{i,j}+e_{i',j'}$ does not belong to the convex hull of the other points of this type, hence is a vertex of $P$. 

If $a_i=1$ then $2e_{i,j}$ does not belong to $P$. Otherwise, the only lattice points in $P$ with coordinates different from $e_{i,j}$ equal to zero are $se_{i,j}$ for $2\leq s\leq a_i$. The point $2e_{i,j}$ does not belong to the convex hull of the other points of this type, hence is a vertex of $P$. 
%	Note that a point in $P$ is its vertex if and only if it cannot be written as a convex combination of other points in $P$. Since $P$ has only nonnegative coordinates, the support of a (nontrivial) convex combination of two points is the union of their supports. So suppose that $e_{i,j}+e_{i',j'}$ can be written as a nontrivial convex combination of two points in $P$. Then these points are zero apart from the coordinates $x_{i,j}$ and $x_{i',j'}$, say the first has $(x_{i,j},x_{i',j'})=(\alpha,\beta)$ and the second $(\alpha',\beta')$. After projecting on these coordinates we have
%	$$
%	(1,1)=(1-\lambda)(\alpha,\beta)+\lambda(\alpha',\beta')\qquad\mbox{for some } \lambda\in (0,1).
%	$$
%	Since $\alpha+\beta\geq 2$ and $\alpha'+\beta'\geq 2$ we have $\beta=2-\alpha$ and $\beta'=2-\alpha'$. Moreover, $\min\{\alpha,\alpha'\}<1<\max\{\alpha,\alpha'\}$. If either $a_i=1$ or $a_{i'}=1$, this has no solution and so $e_{i,j}+e_{i',j'}$ is a vertex (unless $i=i'$ in which case this point does not lie in $P$). If $a_i,a_{i'}\geq 2$ then $\alpha=2$, $\alpha'=0$ gives a valid solution and so, in this case, $e_{i,j}+e_{i',j'}$ is not a vertex of $P$. We also easily see that, if $a_{i}\geq 2$ then the points $2e_{i,j}$ for all $1\leq j\leq b_i$ form vertices of $P$. 
\end{proof}

\begin{exm}
	Consider the three situations in Example~\ref{ex:polytopes}. In (1) all vertices are smooth. In (2) the vertex $2e_{2,1}$ has four rays coming out of it and so it is not smooth. In (3) there is only one non-smooth vertex $2e_{2,1}$.
\end{exm}

The following Lemma~classifies the smooth vertices of $P$ which lie on the hyperplane $F$. Note that two vertices $v_1$ and $v_2$ of a polytope $P$ are connected by an edge if there exists a supporting hyperplane of $P$ that contains these two vertices and no other vertex of $P$. Assume that $i_j\not=i_l$ for any $j\not=l$.

\begin{lema}\label{sSLSV}
	The vertex $e_{i_1,j_1}+e_{i_2,j_2}$ ($i_1\neq i_2$) is smooth only in the following cases:
	\begin{itemize}
		\item [(1)] $k=2$;
		\item [(2)] $k=3$ when $a_{i_1}=a_{i_2}=1$ and $b_{i_3}=1$; or
		\item [(3)] $k\geq3$ when $a_{i_1}=1$ and $a_{i_2}\geq2$.
	\end{itemize}
	If $a_i\geq 3$ then the vertex  $2e_{i,j}$ is smooth. If $a_i=2$ it is smooth only in the following cases:
	\begin{itemize}
		\item [(1)] $k=1$; or 
		\item [(2)] $k=2$ when $b_{i'}=1$, where $i'\neq i$.
	\end{itemize}
\end{lema}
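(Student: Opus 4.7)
The plan is to apply the standard criterion for smoothness of a toric vertex: a vertex $v$ of the normal polytope $P$ is smooth if and only if the inward primitive normals to the facets of $P$ containing $v$ form part of a $\mathbb Z$-basis of the dual of $\mathrm{aff}(P)-v$. In particular this requires $v$ to be a simple vertex, meaning it lies in exactly $\dim P$ facets. First I would record the inward primitive normals in the dual basis $\{e_{i,j}^{*}\}$: $n_F=\sum_{(i,j)}e_{i,j}^{*}$, $n_{R_i}=-\sum_j e_{i,j}^{*}$, and $n_{Z_{i,j}}=e_{i,j}^{*}$. I would then invoke Proposition~\ref{pSVF} to pinpoint the exceptional cases (E1)--(E4) where one of these faces fails to be a facet, and the degenerate cases (D1)--(D2) where $\dim P<n$. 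The whole argument reduces to bookkeeping facets through $v$ and a one-line lattice verification.

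For $v=e_{i_1,j_1}+e_{i_2,j_2}$, by Lemma~\ref{vSLSV} I may assume $a_{i_1}=1$. The candidate facets through $v$ are $F$, $R_{i_1}$, the $n-2$ sets $Z_{i,j}$ with $(i,j)\notin\{(i_1,j_1),(i_2,j_2)\}$, and $R_{i_2}$ iff $a_{i_2}=1$. When $a_{i_2}\ge 2$ the count is exactly $n$, and the normals recover the full dual basis by extracting $e_{i_1,j_1}^{*}=-n_{R_{i_1}}-\sum_{j\ne j_1}e_{i_1,j}^{*}$ and then $e_{i_2,j_2}^{*}$ from $n_F$; this gives cases~(1) (with $k=2$) and~(3). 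When $a_{i_1}=a_{i_2}=1$ the generic count is $n+1$, so $v$ is not simple unless some $Z_{i,j}$ drops out; the exceptional case (E1) of Proposition~\ref{pSVF} shows that this happens precisely for $k=3$ with $b_{i_3}=1$, recovering case~(2). The only remaining situation needing separate treatment is $k=2$ with $\mathbf a=(1,1)$, where $P=F$ has $\dim P=n-2$ and the facets through $v$ are exactly the $Z_{i,j}$ with $(i,j)\ne(i_1,j_1),(i_2,j_2)$ (a count of $n-2$), so $v$ is again simple and smooth in the quotient lattice.

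For $v=2e_{i,j}$ with $a_i\ge 2$, the candidate facets are $F$, the $n-1$ sets $Z_{i',j'}$ with $(i',j')\ne(i,j)$, and $R_i$ iff $a_i=2$; no $R_{i'}$ with $i'\ne i$ contains $v$ since $v$ has zero sum in those blocks. If $a_i\ge 3$ the count is $n$ and smoothness follows from the identity $e_{i,j}^{*}=n_F-\sum_{(i',j')\ne(i,j)}e_{i',j'}^{*}$. If $a_i=2$, the extra facet $R_i$ generically pushes the count to $n+1$ and destroys simplicity, unless an exceptional case from Proposition~\ref{pSVF} removes one of the $Z_{i',j'}$ through $v$: cases (E2)--(E3) correspond exactly to $k=2$ with $b_{i'}=1$, giving case~(2). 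The remaining $k=1$ case is handled separately: then $P$ is the dilated simplex lying in $\{\sum_j x_{1,j}=2\}$, so $V_P\cong\mathbb P^{b-1}$ (possibly a point if $b=1$), automatically smooth, which yields case~(1).

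The main obstacle is disciplined bookkeeping: which face of type $F$, $R_i$, or $Z_{i,j}$ is a facet, which of them contains $v$, and how the situation interacts with the boundary cases (E1)--(E4) and (D1)--(D2) of Proposition~\ref{pSVF}. These exceptional cases are precisely the ones that pull borderline configurations back into the simple regime, so they must be matched up carefully with the configurations listed in the lemma. Once simplicity is established in each smooth configuration the $\mathbb Z$-basis check amounts to at most two elementary integer row operations and is essentially forced by the $n_F$ and $n_{R_i}$ identities above.
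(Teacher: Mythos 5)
Your proposal is correct and follows essentially the same route as the paper: identify which of $F$, $R_i$, $Z_{i,j}$ pass through the given vertex, use Proposition~\ref{pSVF} (in particular the exceptional cases (E1)--(E4) and the degenerate cases (D1)--(D2)) to decide which of those are actually facets, conclude non-smoothness whenever the count exceeds $\dim P$, and verify the lattice-basis condition via the $n_F$ and $n_{R_i}$ identities in the remaining cases. The only difference is organizational --- you branch on the vertex type first while the paper branches on $k$ first --- which does not change the substance of the argument.
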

%\begin{lema}\label{sSLSV}
%	The vertex $e_{i_1,j_1}+e_{i_2,j_2}$ is \emph{not} smooth except for the following cases:
%	\begin{itemize}
%		\item [(1)] $k=2$;
%		\item [(2)] $k=3$ when $a_{i_1}=a_{i_2}=1$ and $b_{i_3}=1$; or
%		\item [(3)] $k\geq3$ when $a_{i_1}=1$ and $a_{i_2}\geq2$.
%	\end{itemize}
%	The vertex  $2e_{i,j}$ is \emph{not} smooth only if $a_{i}=2$, except for the following cases:
%	\begin{itemize}
%		\item [(1)] $k=1$; or 
%		\item [(2)] $k=2$ when $b_{s}=1$, where $s$ is such that $\{i,s\}=\{1,2\}$.
%	\end{itemize}
%%	The vertex  $2e_{i,j}$ is \emph{smooth} except for the following cases.
%%	\begin{itemize}
%%		\item $k\geq 2$, $a_i=2$. 
%%	\end{itemize}
%\end{lema}
\dow
If $k=1$ and $b=1$ then $P$ is either empty, or the interval $[2,a]$ if $a\geq 2$. Hence, it is smooth. If $k=1$ and $b\geq 2$ then we consider two cases: $a=2$ and $a\geq 3$. The points  $e_{1,i}+e_{1,j} ~(i\neq j)$ never form a vertex; c.f. Lemma~\ref{vSLSV}. If $a=2$ then %using notation from Proposition~\ref{pSVF}, $P=F=R_1$ and so the vertices of $P$ are precisely $2e_{1,i}$ for $1\leq i<j\leq b$ and $\dim P=b-1$. T
the polytope $P$ is twice the standard simplex and hence is smooth. 
%The point $2e_{1,i}$ lies in $b-1$ facets $Z_{1,j}$ for $j\neq i$ and so it is smooth proving the exceptional case (1).  
If $a\geq 3$ then $\dim P=b$. Each point $2e_{1,i}$ lies in $b$ facets $F$ and $Z_{1,j}$ for $j\neq i$. The normal vectors form a basis of the lattice and hence $P$ is smooth.

If $k\geq 2$ we first consider points of the form $v:=e_{1,1}+e_{2,1}$ (the same argument for any $e_{i_1,j_1}+e_{i_2,j_2}$ applies). If $a_{1},a_2\geq 2$ then, by Lemma~\ref{vSLSV}, $v$ is not a vertex of $P$. Thus, we consider two cases: (a) $a_{1}=a_{2}=1$ and (b) $a_{1}=1, a_{2}\geq 2$.

 In case (a): If $k=2$ then $P=F=R_1=R_2$ and all its vertices are of the form $e_{1,i}+e_{2,j}$. %The dimension is $\dim P=b_1+b_2-2$. This is exactly the number of facets on which $v$ lies, namely $Z_{1,j}$, $Z_{2,j'}$ for $j\neq 1$, $j'\neq 1$, 
The polytope is a product of two standard simplices and so it is smooth. If $k\geq 3$ then $\dim P=n$ by Proposition~\ref{pSVF}. The vertex $v$ lies on $F$, $R_1$, $R_2$, and $Z_{i,j}$ for $(i,j)\in \mathcal I\setminus \{(1,1),(2,1)\}$. Unless $k=3$ and $b_3=1$, this makes $n+1$ facets and so $v$ cannot be smooth. When $k=3$, $b_3=1$, then by Proposition~\ref{pSVF}~(E1), $Z_{3,1}$ does not form a facet. Indeed, then $v$ is smooth. %and so $v$ lies precisely in $n$ facets and so it is smooth. 
This corresponds to the smooth case (2) in the statement of the lemma.
 
  In case (b), $v$ lies on $F$, $R_{1}$, and $Z_{i,j}$ for all $(i,j)\in \mathcal I\setminus\{(1,1),(2,1)\}$. These are precisely $n$ facets, with normals forming a basis of the lattice, and so $v$ is smooth in this case. This gives the smooth case (3).

 Suppose now that $a_l\geq 2$ and consider the vertex $v:=2e_{l,1}$ (with the same argument for any other $2e_{i,j}$). If $a_l\geq 3$, this vertex lies on $F$, and $Z_{i,j}$ for $(i,j)\in \mathcal I\setminus \{(l,1)\}$ and so it is smooth. If $a_l=2$ then $v$ lies on $F$, $R_l$, and $Z_{i,j}$ for $(i,j)\in \mathcal I\setminus \{(l,1)\}$. Unless $k=2$, $b_s=1$ where $s\neq l$, these are $n+1$ facets and so $v$ is not smooth. In the exceptional case one of $Z_{i,j}$ does not form a facet and in this case $v$ is smooth. This proves the exceptional case (2).
\kdow
%We have a remark.
%\begin{rem}\label{raysSLSV}
%	Note that the facets of $P$ given in Proposition~\ref{pSVF} correspond to the following three types of the ray generators of the dual fan:
%	\begin{itemize}
%		\item [(1)] $u_{F}=\sum_{i,j}e_{i,j};$
%		\item [(2)] $u_{R_i}=\sum_{j=1}^{b_i}-e_{i,j};$
%		\item [(3)] $u_{Z_{i,j}}=e_{i,j}.$
%	\end{itemize}
%	Now for a fixed $i$, $R_i\cap(\cap_{j=1}^{b_i}Z_{i,j})=\emptyset$. Therefore, for any $i$, the rays $u_{R_i}$ and $u_{Z_{i,j}}$ do not form a cone for any $j$, and hence if a cone does not contain the ray $u_{F}$, it is smooth.\textcolor{red}{This was, in different language, already stated above.}
%\end{rem}

%\textcolor{blue}{If the argument, which is a slight modification of \cite[Prop. 7.16]{moz}, in the following Theorem~is correct, what should we do with classification of smooth vertices? Can we somehow still connects classification of smooth vertices and singular locus? Otherwise isn't our main task (i.e. computing singular locus) just comes from \cite{moz}?}
In the next example we show how information from Lemma~\ref{vSLSV} and Lemma~\ref{sSLSV} can be combined. 
\begin{exm}
	(1) Suppose $k=4$ and $\mathbf{a}=(1,1,1,1), \mathbf{b}=(1,1,1,2)$. Then the cones corresponding to the vertices $v_1=e_{1,1}+e_{2,1}$ and $v_2=e_{3,1}+e_{4,1}$ are $\sigma_{v_1}=\conv(e_{1,1}+e_{2,1}+e_{3,1}+e_{4,1}+e_{4,2}, -e_{1,1},-e_{2,1},e_{3,1},e_{4,1},e_{4,2})$ and $\sigma_{v_2}=\conv(e_{1,1}+e_{2,1}+e_{3,1}+e_{4,1}+e_{4,2},e_{1,1},e_{2,1},-e_{3,1},e_{4,2},-e_{4,1}-e_{4,2})$. Both $\sigma_{v_1}$ and $\sigma_{v_2}$ are non-smooth as expected by Lemma~\ref{sSLSV}. 
	
	(2) Suppose $k=4$ and $\mathbf a=(1,2,3,4), \mathbf{b}=(1,2,1,1)$. Here we have only two non-smooth vertices $v_1=2e_{2,1}$ and $v_2=2e_{2,2}$. The corresponding maximal singular cones are $\sigma_{v_1}=\conv(e_{1,1}+e_{2,1}+e_{2,2}+e_{3,1}+e_{4,1}, e_{1,1},-e_{2,1}-e_{2,2},e_{2,2},e_{3,1},e_{4,1})$ and $\sigma_{v_2}=\conv(e_{1,1}+e_{2,1}+e_{2,2}+e_{3,1}+e_{4,1},e_{1,1},e_{2,1},-e_{2,1}-e_{2,2},e_{3,1},e_{4,1})$, but the minimal singular cone is $\sigma_{v_1}\cap\sigma_{v_2}=\conv(e_{1,1}+e_{2,1}+e_{2,2}+e_{3,1}+e_{4,1},e_{1,1},-e_{2,1}-e_{2,2},e_{3,1},e_{4,1})$.
	
	(3) Suppose $k=3$ and $\mathbf{a}=(1,2,3), \mathbf{b}=(1,1,1)$. We have only one singular vertex $v=2e_{2,1}$ and the corresponding singular cone is $\sigma_{v}=\conv(e_{1,1}+e_{2,1}+e_{3,1}, e_{1,1},-e_{2,1},e_{3,1})$.
\end{exm}

The smoothness information for vertices of $P$ gives information about the singular locus of the projective variety $V_P$ associated to $P$. As we argued earlier, this translates to the singular locus of $\widehat T_\Delta$. By  \cite[Proposition~11.1.2]{cox2011toric}, the singular locus $V_{\rm sing}$ of $V_P$ satisfies
\[
V_{\rm sing}=\bigcup_{\sigma\, not\, smooth} V(\sigma),
\]
where $V(\sigma)=\overline{O(\sigma)}$ is the closure of the torus orbit corresponding to a cone $\sigma$ in the normal fan of $P$; see \cite[Chapter 3.2]{cox2011toric} or \cite[Section 5.1]{michalek2018selected}. Note that it is enough to find minimal cones that are singular. Indeed, if we have two cones contained in each other, then the variety corresponding to the bigger cone is contained in the variety corresponding to the smaller. Moreover, if a cone is singular, then so is every cone that contains it (if some set of ray generators cannot be completed to a basis, then no strictly larger set can). The minimal singular cones correspond to irreducible components of the singular locus.  

%For example, we pick two non-smooth vertices $v_1,v_2$ and form an edge between them (if there is any) and decide whether it is smooth or not. Equivalently, we intersect $\sigma_{v_1}$ and $\sigma_{v_2}$ and decide whether the new cone is smooth or not. If the edge is non-smooth, then we go to 1 higher dimensional face of $P$ and decide whether it is smooth. AND we go on...

\begin{exm}
Let	$k=4, \mathbf a=(1,1,1,1), \mathbf b=(1,1,1,1)$. We have $\binom{4}{2}=6$ non-smooth vertices $e_{1,1}+e_{2,1}$, $e_{1,1}+e_{3,1}$, $e_{1,1}+e_{4,1}$, $e_{2,1}+e_{3,1}$, $e_{2,1}+e_{4,1}$, $e_{3,1}+e_{4,1}$. For example, the cone corresponding to $v_1=e_{1,1}+e_{2,1}$ is $\sigma_{v_1}=\conv(e_{1,1}+e_{2,1}+e_{3,1}+e_{4,1},-e_{1,1},-e_{2,1},e_{3,1},e_{4,1})$ and the cone corresponding to $v_2=e_{1,1}+e_{3,1}$ is $\sigma_{v_2}=\conv(e_{1,1}+e_{2,1}+e_{3,1}+e_{4,1},-e_{1,1},e_{2,1},-e_{3,1},e_{4,1})$. Both are singular. Now $\sigma_{v_1}\cap\sigma_{v_2}=\conv(e_{1,1}+e_{2,1}+e_{3,1}+e_{4,1},-e_{1,1},e_{4,1})$ is smooth and so we have two components of the singular locus corresponding to non-smooth cones $\sigma_{v_1}$ and $\sigma_{v_2}$. In fact we have exactly six components of the singular locus since intersection of any two corresponding cones is smooth; see  \cite[Proposition~5.5]{MOZ} for a more general case $\mathbf a=(1,1,\ldots,1), \mathbf b=(1,1,\ldots1)$.
\end{exm}

\begin{exm}
Let $\mathbf a=(1,1,1,1), \mathbf b=(1,1,1,2)$. By Lemma~\ref{sSLSV}, the singular vertices are the nine points $e_{1,1}+e_{2,1}$, $e_{1,1}+e_{3,1}$, $e_{1,1}+e_{4,1}$, $e_{1,1}+e_{4,2}$, $e_{2,1}+e_{3,1}$, $e_{2,1}+e_{4,1}$, $e_{2,1}+e_{4,2}$, $e_{3,1}+e_{4,1}$, $e_{3,1}+e_{4,2}$. This gives nine singular maximal cones $\sigma_{12}$, $\sigma_{13}$, $\sigma_{14}$, $\sigma_{15}$, $\sigma_{23}$, $\sigma_{24}$, $\sigma_{25}$, $\sigma_{34}$, $\sigma_{35}$. However, there are only six minimal cones: $\sigma_{12},\sigma_{13},\sigma_{23},\sigma_{145}=\sigma_{14}\cap\sigma_{15},\sigma_{245}=\sigma_{24}\cap\sigma_{25}, \sigma_{345}=\sigma_{34}\cap\sigma_{35}$. This can be verified by explicit computations in \texttt{Macaulay2}. The rays of the dual cone $\sigma_{12}^\vee$ lie in the product $\{(0,0), (-1,0),(0,-1)\}\times \{(1,0,0),(0,1,0),(0,0,1)\}$ -- in particular, there are nine rays in $\sigma_{12}^\vee$. The cone $\sigma_{12}$ corresponds to  $\Sec(\P^1\times\P^1)\times \P^1\times \P^2$. The rays of the dual cone $\sigma_{345}^\vee$ lie in the product $\{(1,0),(0,1)\}\times \{(0,0,0),(-1,0,0),(0,0,-1)\}$ and $\sigma_{345}$ corresponds to $\P^1\times \P^1\times \Sec(\P^1\times\P^2)$. %\textcolor{red}{Again it would be nice to be concrete.}
\end{exm}

\begin{exm}
	Let $\mathbf{a}=(1,1,2), \mathbf{b}=(1,1,1)$. Note that $Z_{3,1}$ is not a facet of the polytope, c.f. Proposition~\ref{pSVF} (E1). The polytope has four vertices $e_{1,1}+e_{2,1}, e_{1,1}+e_{3,1}, e_{2,1}+e_{3,1}$, and $2e_{3,1}$. The corresponding maximal cones are $\sigma_{12}=\conv(e_{1,1}+e_{2,1}+e_{3,1}, -e_{1,1}, -e_{2,1}), \sigma_{13}=\conv(e_{1,1}+e_{2,1}+e_{3,1}, -e_{1,1}, e_{2,1}), \sigma_{13}=\conv(e_{1,1}+e_{2,1}+e_{3,1}, e_{1,1}, -e_{2,1})$, and $\sigma_3=\conv(e_{1,1}+e_{2,1}+e_{3,1}, e_{1,1}, e_{2,1}, -e_{3,1})$ respectively. Only $\sigma_3$ is singular, see Theorem~\ref{SLSV} (2) below for a generalization.
\end{exm}

%In order to calculate the singular locus of the projective variety associated to the fan $\Sigma$, we exploit the use of \cite[Proposition~11.1.2]{cox2011toric}. To this end, note that each component of the singular locus corresponds to a minimal singular cone of the fan $\Sigma$.

 The next result provides a description of the singular locus of the variety associated to the polytope $P$. Hence, it can be used to completely describe the singular locus of the secant variety of any Segre-Veronese variety.

\begin{thm}\label{SLSV}
The projective variety $V_P$ is smooth if %\textcolor{red}{CHECK I believe (S1-S4) can be written more beautifully! (S2) written in EXCEPT, but (S3) in EITHER, no uniformity!}
\begin{itemize}
\item [(S1)] $k=1$.
\item [(S2)] $k=2$, except for $a_{i_1}=2$ and $b_{i_2}>1$ $(i_1\not=i_2)$.
%\begin{itemize}
%	\item [(S2.1)] $a_1=a_2=1$.
%	\item [(S2.2)] $a_1=2$ (or $a_2=2$) then $b_2=1$ ($b_1=1$ resp.).
%	\item [S2.3] \textcolor{red}{We have more here which comes from vertices outside $F$}.
%\end{itemize}
\item [(S3)] $k=3$ and either (i) $a_1\geq 3$, or (ii) $a_1=1$ and $a_2\geq 3$, or (iii) $a_1=a_2=a_3=b_1=b_2=b_3=1$, or (iv) $a_1=a_2=b_3=1$ and  $a_3\geq 3$.
\item [(S4)] %$k\geq 4$ and either $a_1\geq 3$, or, $a_1=1$ and $a_2\geq 3$. 
	$k\geq4$, except either (i) $a_{i_1}=a_{i_2}=1$ for a pair $i_1\not=i_2$, or (ii) $a_{i_0}=2$ for $1\leq i_0\leq k$.%, or (iii) both (i) and (ii) together.
\end{itemize}
If $V_P$ is not smooth, its singular locus:
	\begin{itemize}
		\item [(1)] for $k=2$, has either (i) only one component if $a_{i_1}=2, a_{i_2}\neq 2, b_{i_2}>1$, $i_1\neq i_2$ or (ii) $2-s$ components if $a_1=a_2=2$, where $s$ is the number of $b_i$'s equal to $1$;
		\item [(2)] for $k=3$, has $\binom{k_1}{2}+k_2-s$ components, where $s$ is the number of three element sets $\{i_1,i_2,i_3\}$ satisfying $a_{i_1}=a_{i_2}=b_{i_3}=1$, and $k_j:=|\{i:~a_i=j\}|$;
		\item [(3)] for $k\geq4$, has $\binom{k_1}{2}+k_2$ components,	where $k_j$ are as above.
	\end{itemize}
\end{thm}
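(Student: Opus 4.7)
The strategy is to combine Lemma \ref{vSLSV} and Lemma \ref{sSLSV} with the general toric description of the singular locus. By \cite[Proposition 11.1.2]{cox2011toric}, the singular locus of $V_P$ is the union $\bigcup V(\sigma)$ over singular cones $\sigma$ of the normal fan $\Sigma$ of $P$, and its irreducible components correspond bijectively with the \emph{minimal} singular cones in $\Sigma$. Since every singular cone lies in some maximal cone $\sigma_v$ for a vertex $v$ of $P$, the proof reduces to: (i) listing all singular vertices, and (ii) determining, for any collection of singular vertices, which faces of the intersection $\bigcap \sigma_{v}$ are still singular so as to identify the minimal ones.

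Step (i) is essentially immediate. By Lemma \ref{vSLSV}, every vertex of $P$ is either a vertex of $Q=a_1\Delta_{b_1}\times\cdots\times a_k\Delta_{b_k}$ (automatically smooth) or of the form $e_{i_1,j_1}+e_{i_2,j_2}$ with $i_1\neq i_2$, $\min\{a_{i_1},a_{i_2}\}=1$, or $2e_{i,j}$ with $a_i\geq 2$. Lemma \ref{sSLSV} then tells us exactly which of these are smooth. Enumerating the resulting conditions across $k=1,2,3,\geq 4$ and handling the exceptions in the hypothesis of Lemma \ref{sSLSV} directly yields the smoothness criteria (S1)--(S4): $V_P$ is smooth precisely when no singular vertex survives.

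For step (ii), in the generic situation (namely $\dim P=n$ and all $u\in\mathcal{U}$ define facets, per Proposition \ref{pSVF}), the plan is to show that the cones from singular vertices organize into exactly $\binom{k_1}{2}+k_2$ families. For each unordered pair $\{i_1,i_2\}$ with $a_{i_1}=a_{i_2}=1$, the cones $\sigma_v$ for $v=e_{i_1,j_1}+e_{i_2,j_2}$, as $(j_1,j_2)$ varies, share a common face $\tau_{i_1,i_2}$ generated by $u_F$, $u_{R_{i_1}}$, $u_{R_{i_2}}$ and all $u_{Z_{i,j}}$ with $i\notin\{i_1,i_2\}$; a direct check shows that $\tau_{i_1,i_2}$ is singular but that removing any ray yields a smooth cone, so $\tau_{i_1,i_2}$ is the minimal singular cone of this family. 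Analogously, for each $i$ with $a_i=2$, the vertices $2e_{i,j}$ yield a common minimal singular cone $\tau_i$ spanned by $u_F$, $u_{R_i}$ and the $u_{Z_{i',j'}}$ with $i'\neq i$. One verifies that $\tau_{i_1,i_2}\cap\tau_{i'_1,i'_2}$, $\tau_{i_1,i_2}\cap\tau_{i}$, and $\tau_i\cap\tau_{i'}$ are smooth cones, so these $\binom{k_1}{2}+k_2$ minimal cones give genuinely distinct components.

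The correction terms arise from Proposition \ref{pSVF}: whenever some $u_{Z_{i,j}}$ fails to define a facet, the corresponding $\sigma_v$ degenerates and either $v$ ceases to be singular or two families $\tau$ become identified. Concretely, case (E1) with $k=3$ and $b_{i_3}=1$, $a_{i_1}=a_{i_2}=1$ makes the vertex $e_{i_1,j_1}+e_{i_2,j_2}$ smooth (matching the exceptional smooth case of Lemma \ref{sSLSV}), which removes one component and produces the $-s$ correction in the $k=3$ count; the $k=2$ analysis handles cases (1)(i)--(ii) similarly using the low-$k$ exceptions of Proposition \ref{pSVF}. I expect the main technical obstacle to be the explicit lattice computation certifying that the intersections $\tau_{i_1,i_2}\cap\tau_{i_1,i_3}$ of two families sharing an index are smooth: after dropping one of $u_{R_{i_2}}$ or $u_{R_{i_3}}$ and adjoining a suitable $u_{Z}$, one must check that the resulting rays extend to a basis of the ambient lattice, and this needs to be verified case-by-case using the explicit facet list from Proposition \ref{pSVF}.
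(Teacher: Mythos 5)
Your proposal follows essentially the same route as the paper: identify the singular vertices via Lemma~\ref{vSLSV} and Lemma~\ref{sSLSV}, observe that every singular cone of the normal fan must contain $u_F$, and exhibit the minimal singular cones as those generated by $u_F$ together with two rays $u_{R_{i_1}},u_{R_{i_2}}$ (when $a_{i_1}=a_{i_2}=1$) or one ray $u_{R_i}$ (when $a_i=2$) plus the relevant $u_{Z_{i',j'}}$, with the correction terms for $k\leq 3$ coming from the exceptional facet cases of Proposition~\ref{pSVF}. This matches the paper's argument, including the count $\binom{k_1}{2}+k_2$ and the subtractions in the low-$k$ cases.
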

\dow %\textcolor{blue}{There is NO use of the classification of smooth vertices in the proof of (1-3) in the following argument. Can you spot one?}
Let $\Sigma$ be the normal fan of the polytope $P$. The vectors $u_{F}=\sum_{i,j}e_{i,j}$,  $u_{R_i}=\sum_{j=1}^{b_i}-e_{i,j}$ and $u_{Z_{i,j}}=e_{i,j}$ are the possible rays of $\Sigma$, c.f. Proposition~\ref{pSVF}. We will determine the (minimal) singular cones of $\Sigma$. Every singular cone must contain $u_F$ since the vertices which do not lie on the corresponding facet $F$ are smooth. %we discuss the cases (1-4) concerning number of components of the singular locus.

Case I. $k\geq4$

In this case all the above mentioned vectors are rays of $\Sigma$, c.f. Proposition~\ref{pSVF}. By Lemma~\ref{sSLSV}, singular cones of $\Sigma$ appear when either $a_{i_1}=a_{i_2}=1$ for $i_1\not=i_2$, or $a_{i_0}=2$. That is, in all other cases the associated variety is smooth, giving (S4).

For the proof of (3), note that at least three rays of type $u_{R_i}$ do not belong simultaneously to a cone of $\Sigma$ containing $u_F$. Indeed, the intersection of corresponding facets is empty, and hence they do not form a cone of $\Sigma$. Therefore, we have three cases to discuss: (i) when a cone contains $u_F$ and exactly two rays $u_{R_{i_1}}, u_{R_{i_2}}$, (ii) when a cone contains $u_F$ and exactly one ray $u_{R_{i_0}}$, and (iii) when a cone contains $u_F$ and no ray of type $u_{R_{i}}$. In case (i), if one of $a_{i_1}$ or $a_{i_2}$ is at least $2$, then again the intersection of corresponding facets is empty and hence we do not have a cone of $\Sigma$. So assume $a_{i_1}=a_{i_2}=1$. Such a cone also contains every ray $u_{Z_{i,j}}, i\not=i_1,i_2$, since $F\cap R_{i_1}\cap R_{i_2}$ is contained in every $Z_{i,j}, i\not=i_1,i_2$, making it a singular cone of $\Sigma$. Note that any subcone of such a cone is smooth and hence we have $\binom{k_1}{2}$ components of the singular locus of the variety associated to $P$ in this case.

In case (ii), if $a_{i_0}\geq3,$ then by the same argument as above $u_F$ and $u_{R_{i_0}}$ do not form a cone of $\Sigma$. Therefore $a_{i_0}\leq2$. If $a_{i_0}=2$, then every $u_{Z_{i,j}}, i\not=i_0$, belongs to such a cone of $\Sigma$, making it a singular cone.  Note that any subcone of such a cone is smooth. If $a_{i_0}=1,$ then we will show that there is no singular cone. Indeed, such a cone cannot contain every $u_{Z_{i_0,j}}$ since intersection of $R_{i_0}$ with every $Z_{i_0,j}$ is empty. Hence such a cone, in order to be singular, would have to contain every ray $u_{Z_{i,j}},i\not=i_0$, but then it would not be a cone in $\Sigma$. Hence, we have $k_2$ components of the singular locus  of the variety associated to $P$ in this case. In case (iii), such a cone, in order to be singular, would have to contain every ray $u_{Z_{i,j}}$, but then it would not be a cone in $\Sigma$. This finishes (3) and hence the Case I.

Case II. $k=3$

We first prove (S3). Recall that $a_1\leq a_2\leq a_3$ and the vertices which are outside of $F$ are smooth. Here we can have three possibilities for $a_1$. If $a_1\geq3$, then $a_2, a_3\geq3$ which implies that the vertices of $P$ lying on $F$ are only of type $2e_{i,j}$, c.f. Lemma~\ref{vSLSV}. By Lemma~\ref{sSLSV}, these vertices are smooth and hence so is the associated variety, giving part (i) of (S3). If $a_1=2$, then again we only have vertices of type $2e_{i,j}$ lying on $F$. However, the vertices $2e_{1,j}$ for $1\leq j\leq b_1$ are singular by Lemma~\ref{sSLSV}. %, which gives a part of (S3). 
Finally, if $a_1=1$, then we consider following cases. If $a_1=1$ and $a_2\geq3$, then vertices of $P$ lying on $F$ are of type $e_{1,j}+e_{i,j'}, i\neq 1,$ and $2e_{i,j}, i\neq1$, c.f.~Lemma~\ref{vSLSV}. By Lemma~\ref{sSLSV}, all these vertices are smooth and hence so is the associated variety, giving part (ii) of (S3). If $a_1=1$ and $a_2=2$, then by similar argument as above, vertices of type $2e_{2,j}$ are singular, which gives a part of (S3). The only remaining case to consider is $a_1=a_2=1$. If $a_3\geq 3$, then the vertices of $P$ of the type $e_{1,j}+e_{3,j'}, e_{2,j}+e_{3,j'}$ and $2e_{3,j}$ are always smooth, c.f. Lemma~\ref{sSLSV}. Moreover, the vertices $e_{1,j}+e_{2,j'}$ are singular unless $b_3=1$. This confirms part (iv) of (S3). If $a_1=a_2=1$ and $a_3=2$, then %by using similar argument as above,
 the vertices of $P$ of the type $2e_{3,j}$ are singular, which gives a part of (S3). If $a_1=a_2=a_3=1$, then by Lemma~\ref{vSLSV} the vertices of $P$ lying on $F$ are only of the type $e_{i,j}+e_{i',j'} (i\neq i')$, which are smooth unless $b_i\geq2$ for some $i\in \{1,2,3\}$, giving part (iii) of (S3). This finishes (S3).

For the proof of (2), we follow the same line of reasoning as in (3). That is, we again have three cases to discuss here. In case (i), we get $a_{i_1}=a_{i_2}=1$ and hence $\binom{k_1}{2}$ components of the singular locus of the variety associated to $P$ unless $b_{i_3}=1$. Indeed, $Z_{i_3,1}$ is not a facet (c.f. (E1) of Proposition~\ref{pSVF}) and hence $u_{Z_{i_3,1}}$ is not a ray of the cone of $\Sigma$. Therefore, the latter cone of $\Sigma$ is smooth as the rays of it are a part of a basis of the lattice. In case (ii), we get $a_{i_0}\leq2$. If $a_{i_0}=1,$ then there is no singular cone in $\Sigma$ as in Case I. If $a_{i_0}=2$, then we have $k_2$ components of the singular locus of the variety associated to $P$. In case (iii), again there is no singular cone in $\Sigma$ as in Case I. This finishes (2) and hence the Case II. %, unless $b_{i_0}=a_{i_1}=a_{i_2}=1$ as $Z_{i_0,1}$ is not a facet of the polytope. Hence the latter cone is smooth. The case (iii) can be dealt as above. This finishes (2).

Case III. $k=2$

We first prove (S2). Note that the vertices of $P$ of the type $e_{i,j}+e_{i',j'} (i\neq i')$ are always smooth, see Lemma~\ref{sSLSV}. Hence the (possible) singular vertices of $P$ lying on $F$ are of the type $2e_{i,j}$. If $a_1\geq3$, then $a_2\geq3$ and by Lemma~\ref{sSLSV} the vertices $2e_{i,j}$ are smooth, giving a part of (S2). If $a_1=2$ and $a_2\geq3$, then the vertices $2e_{1,j}$ are singular unless $b_2=1$ and $2e_{2,j}$ are smooth, c.f. Lemma~\ref{sSLSV}, giving a part of (S2). If $a_1=a_2=2$, then the vertices $2e_{1,j}$ and $2e_{2,j}$ are singular unless $b_1=b_2=1$, again confirming a part of (S2). If $a_1=1$ and $a_2=2$, then the vertices $2e_{2,j}$ are singular unless $b_1=1$, giving a part of (S2). If $a_1=1$ and $a_2\geq3$, then by Lemma~\ref{sSLSV} the vertices $2e_{2,j}$ are smooth. This finishes (S2).

We now prove (1). The case $a_1=a_2=1$ gives smooth variety associated to the polytope $P$ since the cones in $\Sigma$ correspond to vertices of type $e_{i,j}+e_{i',j'} (i\neq i')$ which are smooth as discussed above. For the rest of the proof we follow the same line of reasoning as in Case I. We, in fact, have only two cases (ii) and (iii) to discuss. In case (ii), the cone contains either $u_F$ and $u_{R_1}$ or $u_F$ and $u_{R_2}$. If it contains $u_F$ and $u_{R_1}$, then $a_1\leq2$. If $a_1=1$, we proceed as above and get only smooth cones (if exist) in $\Sigma$. If $a_1=2$, then every ray $u_{Z_{2,j}}$ belongs to the cone as well, making it a (minimal) singular cone of $\Sigma$, unless $b_2=1$. This gives one component of the singular locus of the variety associated to $P$, unless $b_2=1$. In the latter case $Z_{2,1}$ is not a facet of $P$ (c.f. (E3) of Proposition~\ref{pSVF}) and hence such a cone of $\Sigma$ is smooth. If now the cone contains $u_F$ and $u_{R_2}$, then $a_2\leq2$. The case $a_2=1$ has already been discussed above, as $a_1=1$ in this case. If $a_2=2$, then every ray $u_{Z_{1,j}}$ belongs to the cone as well, making it a singular cone of $\Sigma$, unless $b_1=1$. This gives one component of the singular locus, unless $b_1=1$. In the latter case $Z_{1,1}$ is not a facet of the polytope $P$ (c.f. (E2) of Proposition~\ref{pSVF}) and hence such a cone of $\Sigma$ is smooth. Note that if $a_1=a_2=2$, then there are two singular cones in $\Sigma$, one containing $u_F$ and $u_{R_1}$ and other containing $u_F$ and $u_{R_2}$, unless $b_1=1$ or $b_2=1$. In case (iii), we follow the same line of reasoning as in Case I. This finishes (1) and hence the case Case III.

Case IV. $k=1$

All the cones of $\Sigma$ are smooth since all the vertices of the polytope $P$ are smooth, c.f. Lemma~\ref{sSLSV}. Therefore, the associated variety to the polytope $P$ is smooth. This completes (S1) and hence the proof.
\kdow
The cases when the polytope $P$ is smooth correspond to smoothness of $V_P$. This projective variety can be smooth if and only if either:
\begin{enumerate}
\item $V_P$ is a projective space, which happens if and only if $\widehat V_P$ is smooth and if and only if all lattice points of $P$ are linearly independent;
\item zero is the unique singular point of $\widehat V_P$.
\end{enumerate}

In the first case the secant variety is smooth, which happens if and only if it fills the ambient space. These cases were characterized in Theorem~\ref{GSV}. Recall that the zero point in $\widehat V_P$ corresponds to the Segre-Veronese variety. This means that the second case happens if and only if the singular locus of the secant variety coincides with the Segre-Veronese variety.
\begin{cor}
The singular locus of $\sigma_2(X)$ coincides with $X$ if and only if:
\begin{itemize}
\item $k=1$ and $a>2$ or ($a=2$ and $b>1$);
\item $k=2$ and ($a_1,a_2>2$) or ($a_1=a_2=1$ and $b_1,b_2>1$) or ($a_{1}=2$, $b_{2}=1$ and $a_2\neq 2$) or ($a_2=2$, $b_1=1$ and $a_1\neq 2$) or ($a_1=a_2=2$ and $b_1=b_2=1$);
\item $k=3$ and $a_1\geq 3$ or ($a_1=1$ and $a_2\geq 3$) or ($a_1=a_2=b_3=1$ and  $a_3\geq 3$);
\item $k\geq 4$ and all $a_i\neq 2$ and there is at most one $a_i=1$. 
\end{itemize}
\end{cor}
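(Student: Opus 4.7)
My plan is to combine the two paragraphs sitting immediately above the corollary with Theorems~\ref{SLSV} and \ref{GSV}. Those remarks establish that $V_P$ is smooth in exactly two mutually exclusive situations: either $V_P$ is a projective space, which by Theorem~\ref{GSV} happens precisely when $\sigma_2(X)$ fills the ambient space; or $V_P$ is smooth but the affine cone $\widehat V_P$ has its unique singular point at the origin. In the latter situation, since by Remark~\ref{rem:covered} the local structure of $\sigma_2(X)$ is a product of $\mathbb C^n$ and $\widehat T_\Delta$, with the vertex of $\widehat T_\Delta$ tracing out exactly $X$ as the free parameters $t_{i,j}$ vary, the singular locus of $\sigma_2(X)$ coincides with $X$. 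Hence the corollary amounts to listing those smoothness cases of Theorem~\ref{SLSV} that are \emph{not} among the ambient-filling cases (S1)--(S3) of Theorem~\ref{GSV}.

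I would then proceed case by case on $k$, subtracting from each smoothness regime of Theorem~\ref{SLSV} the corresponding entries of Theorem~\ref{GSV}. For $k=1$, Theorem~\ref{SLSV}~(S1) makes $V_P$ always smooth, and removing $a=1$ as well as the pair $(a=2,b=1)$ leaves exactly ``$a>2$ or ($a=2$ and $b>1$)'', matching the first bullet. For $k=2$, the smoothness list of Theorem~\ref{SLSV}~(S2) is characterised by the absence of indices $i_1\neq i_2$ with $a_{i_1}=2$ and $b_{i_2}>1$; subtracting the only ambient-filling case $\mathbf a=(1,1)$ with $b_1=1$ (or symmetrically $b_2=1$) produces exactly the five disjoint subcases listed in the second bullet. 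For $k=3$, the four smoothness regimes (i)--(iv) of Theorem~\ref{SLSV}~(S3) collapse, after removing $\mathbf a=\mathbf b=(1,1,1)$ from (iii), to the three items of the third bullet. For $k\geq 4$, Theorem~\ref{SLSV}~(S4) gives smoothness if and only if no $a_i$ equals $2$ and at most one $a_i$ equals $1$, and no such tuple appears among the ambient cases of Theorem~\ref{GSV}, so the description translates verbatim to the final bullet.

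The main obstacle is purely bookkeeping, concentrated in $k=2$: there the boundary exceptions (E2) and (E3) of Proposition~\ref{pSVF} (where certain $Z_{i,j}$ fail to be facets of $P$) interact with the smoothness statement of Theorem~\ref{SLSV}~(S2), and one has to verify carefully that the resulting conditions on $b_1$ and $b_2$ distribute correctly among the subcases ``$a_1,a_2>2$'', ``$a_1=a_2=1$ with $b_1,b_2>1$'', ``$a_1=2$, $b_2=1$, $a_2\neq 2$'', ``$a_2=2$, $b_1=1$, $a_1\neq 2$'', and ``$a_1=a_2=2$, $b_1=b_2=1$''. All remaining items are a direct read-off from the two theorems, so no new geometric input is needed beyond what has already been proved.
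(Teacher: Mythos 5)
Your overall strategy is exactly the one the paper intends: the corollary carries no separate proof there either, being read off from the two paragraphs preceding it together with Theorem~\ref{SLSV} (when $V_P$ is smooth) and Theorem~\ref{GSV} (when the secant fills the ambient space, so that its singular locus is empty rather than $X$). Your reduction — singular locus equals $X$ iff $V_P$ is smooth but $\sigma_2(X)$ is not among the smooth cases (S1)--(S3) of Theorem~\ref{GSV} — is the correct and intended one, and your treatment of $k=1$, $k=3$ and $k\geq 4$ is a faithful read-off.

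The gap is in the $k=2$ bullet, precisely the place you identify as ``purely bookkeeping'' and then do not actually carry out. Theorem~\ref{SLSV}~(S2) says $V_P$ is smooth for $k=2$ unless some $a_{i_1}=2$ with $b_{i_2}>1$ ($i_1\neq i_2$); in particular $V_P$ is smooth whenever $a_1=1$ and $a_2\geq 3$ (no $a_i$ equals $2$), and Theorem~\ref{GSV} shows the secant does not fill the ambient space in that range. So the subtraction you describe yields a sixth family, $\mathbf a=(1,a_2)$ with $a_2\geq 3$ (e.g.\ $\P^{b_1}\times v_3(\P^{b_2})$, where one checks directly that every vertex of $P$ is smooth and $\dim\sigma_2(X)=2n+1$ is less than the ambient dimension), which does \emph{not} appear among the five subcases of the second bullet. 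Hence your assertion that the subtraction ``produces exactly the five disjoint subcases listed'' is false as stated: either you must exhibit a singular vertex of $P$ in that range (which Lemma~\ref{sSLSV} rules out, since no $a_i$ equals $2$ and points $e_{1,j}+e_{2,j'}$ are smooth for $k=2$), or you must flag that the list in the corollary is not what Theorems~\ref{SLSV} and \ref{GSV} actually give for $k=2$. As written, the one step of the proof that requires genuine verification is replaced by an assertion whose content does not hold, so the second bullet is not established.
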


We are now ready to provide the description of the singular locus of the secant variety of Serge-Veronese variety. %The following result follows from the proof above.
\begin{cor}\label{cSLSV}
If $a_{i_1}=a_{i_2}=1$ then the corresponding component of the singular locus of $\sigma_2(X)$ is isomorphic to
		\[
		v_{a_1}(\P^{b_1})\times\dots\times\widehat{v_{a_{i_1}}(\P^{b_{i_1}})}\times\dots\times\widehat{v_{a_{i_2}}(\P^{b_{i_2}})}\times\dots\times v_{a_k}(\P^{b_k})\times \Sec(\P^{b_{i_1}}\times \P^{b_{i_2}}),
		\]
		where $\widehat{\cdot}$ denotes omission.
If $a_i=2$ then the corresponding component of the singular locus of $\sigma_2(X)$ is isomorphic to
		\[
		v_{a_1}(\P^{b_1})\times\dots\times\widehat{v_{a_{i}}(\P^{b_{i}})}\times\dots\times v_{a_k}(\P^{b_k})\times \Sec(v_{a_{i}}(\P^{b_{i}})).
		\]
		If $k\geq 4$ then these are precisely all components of the singular locus. 
\end{cor}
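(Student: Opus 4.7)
The strategy is to extract the minimal singular cones from the proof of Theorem \ref{SLSV}, compute the corresponding torus-orbit closures in $V_P$, and reinterpret each one as a concrete subvariety of $\sigma_2(X)$ via the cumulant parametrization of Lemma \ref{lem:sec}. By Remark \ref{rem:covered}, the secant is locally isomorphic to $\mathbb{C}^n\times\widehat T_\Delta$, so it suffices to match orbit closures in the toric factor; since the product description of each claimed component is intrinsic to $\sigma_2(X)$, it will patch automatically across the covering.

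From the proof of Theorem \ref{SLSV} we read off, for $k\geq 4$, exactly $\binom{k_1}{2}+k_2$ minimal singular cones in the normal fan of $P$. For each pair $\{i_1,i_2\}$ with $a_{i_1}=a_{i_2}=1$, the cone $\sigma_{i_1,i_2}$ has rays $u_F$, $u_{R_{i_1}}$, $u_{R_{i_2}}$, and $u_{Z_{i,j}}$ for all $i\notin\{i_1,i_2\}$. For each $i_0$ with $a_{i_0}=2$, the cone $\sigma_{i_0}$ has rays $u_F$, $u_{R_{i_0}}$, and $u_{Z_{i,j}}$ for all $i\neq i_0$. The counting statement thus follows from Theorem \ref{SLSV}(3); what remains is the geometric identification of the associated orbit closures.

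To pull back each orbit closure through the parametrization of Lemma \ref{lem:sec}, recall $z_\sigma=\pi(1-\pi)(1-2\pi)^{\dim\sigma-1}\prod_{i\in\sigma}(t_i-u_i)$ for $\dim\sigma\geq 1$. The vanishing of the monomials corresponding to the rays of $\sigma_{i_1,i_2}$ forces $t_i=u_i$ for every index $i$ lying in a block of $\{1,\dots,k\}\setminus\{i_1,i_2\}$, while $t_{i_1},u_{i_1},t_{i_2},u_{i_2}$ and $\pi$ remain free. Globally on $\sigma_2(X)$ this is exactly the locus of rank-two tensors whose two rank-one summands coincide in every Segre-Veronese factor outside $\{i_1,i_2\}$. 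Because $a_{i_1}=a_{i_2}=1$, the $(i_1,i_2)$-part is an arbitrary element of $\Sec(\P^{b_{i_1}}\times\P^{b_{i_2}})$, while the coinciding factors parametrize $\prod_{i\notin\{i_1,i_2\}} v_{a_i}(\P^{b_i})$, yielding the first product. The same analysis for $\sigma_{i_0}$ imposes $t_i=u_i$ for $i\neq i_0$ and varies the $i_0$-slot over $\Sec(v_{a_{i_0}}(\P^{b_{i_0}}))$, giving the second product.

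The main technical point will be rigorously matching rays of each minimal singular cone to the above parameter vanishings: the distortion factors $(1-2\pi)^{\dim\sigma-1}$ and $\pi(1-\pi)$ in the cumulant formula must be tracked carefully to confirm that the orbit closure coincides with the intended subvariety on a dense open set. Once this is in place, irreducibility and an elementary dimension count (from the codimension of the cone against the explicit Segre-Veronese description) force equality of the orbit closure with the claimed product, and Theorem \ref{SLSV}(3) closes the case $k\geq 4$.
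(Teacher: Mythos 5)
Your proposal is correct and follows essentially the same route as the paper: both extract the minimal singular cones from the proof of Theorem~\ref{SLSV}, pass to the corresponding orbit closures, and identify them with the claimed products via the toric/cumulant description of Lemma~\ref{lem:sec} and Theorem~\ref{main} (the paper reads off the lattice points of the dual face directly, which is equivalent to your setting $t_i=u_i$ in the blocks outside $\{i_1,i_2\}$, resp.\ outside $i_0$). One small correction: what vanishes on the orbit closure are the coordinates indexed by lattice points of $P$ \emph{off} the face dual to the cone, not ``monomials corresponding to the rays,'' and the scalar factors $\pi(1-\pi)(1-2\pi)^{\dim\sigma-1}$ play no role in that vanishing, so the ``main technical point'' you flag is not actually an obstacle.
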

\begin{proof}
By the proof of Theorem~\ref{SLSV} the face representing the component of the singular locus of $V_P$ corresponding to $a_{i_1}=a_{i_2}=1$ has lattice points $e_{i_1,j_1}+e_{i_2,j_2}$. For $a_i=2$ the corresponding lattice points are of the form $e_{i,j_1}+e_{i,j_2}$ and $2e_{i,j}$. These respectively parameterize $\Sec(\P^{b_{i_1}}\times \P^{b_{i_2}})$ and $\Sec(v_{a_{i}}(\P^{b_{i}}))$. Recalling that the zero point in the affine space corresponds to the Segre-Veronese variety we obtain the claimed result.
\end{proof}

\section*{Acknowledgments.} AK is thankful to MPI Leipzig for the invitation and financial support for a visit to the institute during this project. He is also grateful to the non-academic staff, in particular Saskia Gutzschebauch, at MPI for helping him to have a very smooth arrival and then a nice stay.

\appendix

%\section{The toric varieties featured in this article}
%
%For a general simplicial complex $\Delta$ we have $V_\Delta=\overline{e_\Delta(\C^n)}$, its projection $T_\Delta$ onto $x_\sigma$ with $\dim(\sigma)\geq 1$ and $\widehat T_\Delta$. The secant variety $\sigma_\Delta(V_\Delta)\simeq \C^n\times \widehat T_\Delta$ by Theorem~\ref{main}.
%
%Now let $\Delta=\Delta_{SV}$ be the simplex defining the Segre-Veronese variety $X$. Here $X$ is the projectivization of $V_\Delta$ and $X$ can be covered with toric varieties isomorphic to $V_\Delta$. This implies that $\sigma_2(X)$ can be covered with $\sigma_2(V_\Delta)\simeq \C^n\times \widehat T_\Delta$; c.f. Remark~\ref{rem:covered}. We showed that $\widehat T_\Delta$ is a normal variety and so also $\sigma_2(X)$ is normal.
%
%The projective variety defined by the polytope $P$ is $V_P$. Hence $V_P$ is the projetivization of $T_\Delta$. The singular locus of $\widehat T_\Delta$ projectivized gives the singular locus of the corresponding projective variety. We identify the latter in Section~\ref{sec:locus}.
%

%\input{5_figures}

\bibliographystyle{siam}
\bibliography{toriccum}

\end{document}